\titleformat*{\subsection}{\bfseries\itshape}
\providecommand{\keywords}[1]
{
\noindent{\small
	\textbf{Keywords:} #1}
}
\providecommand{\msc}[1]
{
\noindent{\small
	\textbf{2020 MSC:} #1}
}
\newtheorem{theorem}{Theorem}[section]
\newtheorem{definition}[theorem]{Definition}
\newtheorem{lemma}[theorem]{Lemma}
\newtheorem{corollary}[theorem]{Corollary}
\newtheorem{proposition}[theorem]{Proposition}
\theoremstyle{definition}
\newenvironment{remark}
{\pushQED{\qed}\examplex}
	{\popQED\endexamplex}
\numberwithin{equation}{section}
\newcommand{\R}{{\mathbb R}}
\newcommand{\N}{{\mathbb N}}
\newcommand{\Z}{{\mathbb Z}}
\newcommand{\C}{{\mathbb C}}
\newcommand{\K}{{\mathbb K}}
\newcommand{\cA}{\mathcal{A}}
\newcommand{\cB}{\mathcal{B}}
\newcommand{\cC}{\mathcal{C}}
\newcommand{\cD}{\mathcal{D}}
\newcommand{\cH}{\mathcal{H}}
\newcommand{\cO}{\mathcal{O}}
\newcommand{\cJ}{\mathcal{J}}
\newcommand{\cL}{\mathcal{L}}
\newcommand{\cPW}{\mathcal{PW}}
\DeclareMathOperator{\tr}{tr}
\DeclareMathOperator{\lspan}{span}
\DeclareMathOperator{\ai}{Ai}
\DeclareMathOperator{\bi}{Bi}
\DeclareMathOperator{\wi}{Si}
\DeclareMathOperator{\dom}{\cD}
\DeclareMathOperator{\ran}{ran}
\DeclareMathOperator{\sub}{Sub}
\DeclareMathOperator{\assoc}{Assoc}
\newcommand{\spec}{\sigma}
\newcommand{\ds}[1]{\displaystyle{#1}}
\newcommand{\abs}[1]{\left\lvert #1 \right\rvert}
\newcommand{\abss}[1]{\bigl\lvert #1 \bigr\rvert}
\newcommand{\norm}[1]{\left\lVert #1 \right\rVert}
\newcommand{\inner}[2]{\left\langle#1,#2\right\rangle}
\newcommand{\cc}[1]{\overline{#1}}
\newcommand{\defeq}{\mathrel{\mathop:}=}
\title {\bf Oversampling on a class of symmetric regular
	de Branges spaces}
\author[1]{Luis O. Silva%
			\thanks{\faEnvelopeO\, {silva@iimas.unam.mx}}%
			}
\author[2]{Julio H. Toloza%
			\thanks{\faEnvelopeO\, {julio.toloza@uns.edu.ar (corresponding author)}}%
			}
\affil[1]{Departamento de Física Matemática\\
		 Instituto de Investigaciones en Matemáticas Aplicadas y en Sistemas\\
		 Universidad Nacional Autónoma de México\\
		 C.P. 04510, México D.F.}
\affil[2]{Instituto de Matemática (INMABB)\\
		 Departamento de Matemática\\
		 Universidad Nacional del Sur (UNS) - CONICET\\
		 Bahía Blanca, Argentina}
\date{}
\begin{document}

\maketitle

\begin{abstract}
A de Branges space $\cB$ is regular if the constants belong to its space of associated
functions and is symmetric if it is isometrically invariant under the map
$F(z) \mapsto F(-z)$. Let $K_\cB(z,w)$ be the reproducing kernel in $\cB$ and $S_\cB$ be
the operator of multiplication by the independent variable with maximal domain in
$\cB$.

Loosely speaking, we say that $\cB$
has the $\ell_p$-oversampling property relative to a proper subspace
$\cA$ of it, with $p\in(2,\infty]$, if there exists
$J_{\cA\cB}:\C\times\C\to\C$ such that $J(\cdot,w)\in\cB$ for all $w\in\C$,
\begin{equation*}
\sum_{\lambda\in\sigma(S_{\cB}^{\gamma})}
\left(\frac{\abs{J_{\cA\cB}(z,\lambda)}}{K_\cB(\lambda,\lambda)^{1/2}}\right)^{p/(p-1)}
<\infty
\quad\text{and}\quad
F(z) = \sum_{\lambda\in\sigma(S_{\cB}^{\gamma})}
		\frac{J_{\cA\cB}(z,\lambda)}{K_\cB(\lambda,\lambda)}F(\lambda),
\end{equation*}
for all $F\in\cA$ and almost every self-adjoint extension $S_{\cB}^{\gamma}$ of $S_\cB$. This
definition
is motivated by the well-known oversampling property of Paley-Wiener spaces.

In this paper we provide sufficient conditions for a symmetric,
regular de Branges space
to have the $\ell_p$-oversampling property relative to a chain of de Branges subspaces of it.
\end{abstract}

\bigskip
\keywords{de Branges spaces, oversampling, sampling theory, canonical systems}

\msc{
42C15, 
46E22, 
47A06, 
47B32, 
94A20 
}


\section{Introduction}

\subsection{de Branges spaces}

A reproducing kernel Hilbert space of entire functions is a de Branges space $\cB$ if
it is isometrically invariant under the maps
\[
F(z) \mapsto F^\#(z) = F(z^*)^*\quad\text{and}\quad
F(z) \mapsto \frac{z-w^*}{z-w}F(z),
\]
where $w\in\C$ denotes a zero, if any, of the function $F$. Additionally, $\cB$
is regular (or short in some literature \cite{dym-mckean}) if it is closed under the map
\[
F(z)\mapsto \frac{F(z) - F(w)}{z-w}
\]
for some (hence every) fixed $w\in\C\setminus\R$; equivalently, $\cB$ is regular if
the constants belong to the space of associated functions $\assoc(\cB)$ \cite{debranges-book}.
Finally, $\cB$ is symmetric when it is isometrically invariant under the map
\[
F(z) \mapsto F^{-}(z) = F(-z).
\]

The reproducing kernel of a de Branges space $\cB$ will be denoted by $K(z,w)$
(or $K_\cB(z,w)$ if disambiguation is necessary) with the convention
$K(\cdot,w)\in\cB$ ($w\in\C$) so
\[
F(w) = \inner{K(\cdot,w)}{F}_{\cB} \quad (F\in\cB).
\]
Moreover, $K(z,w) = K(w,z)^* = K(w^*,z^*)$.
The inner product on $\cB$ ---or any other Hilbert space--- will be assumed linear in
its second argument.

A de Branges space can be generated from an Hermite-Biehler (or de
Branges) function, that is, an entire function $E$ that obeys the
inequality $\abs{E(z)}>\abs{E(z^*)}$ for all $z\in\C_+$ (the upper
open half-plane). Given such a function $E$, the corresponding de
Branges space is a subspace of the Hardy space $\cH^2(\C_+)$, to wit,
\[
\cB(E) \defeq \left\{F \text{ entire}: F/E , F^\#/E \in \cH^2(\C_+)\right\}
\]
endowed with the inner product of $\cH^2(\C_+)$. According to \cite[Thm.~23]{debranges-book},
given a (non-trivial) de Branges space $\cB$ there exists an Hermite-Biehler function
$E$ such that $\cB = \cB(E)$ isometrically.
If $\cB$ is regular, then $E$ is zero-free on the real line \cite[\S 6.2, Prop.~2]{dym-mckean}.
By \cite[\S 6.1, Prop.~4]{dym-mckean}, $\cB$ is symmetric if and only if $E$ satisfies
$E^\#(z) = E(-z)$. As a consequence, if $\cB$ is regular and symmetric
then $\cB=\cB(E)$ isometrically for a unique zero-free Hermite-Biehler function $E$ such
that $E(0) = 1$ (see \cite[\S 6.1, Prop.~5]{dym-mckean}).

A de Branges subspace of $\cB$ is a (closed) subspace $\cA\subset\cB$ that itself is a
de Branges space (endowed with the inner product of $\cB$). The collection of de Branges
subspaces of a given de Branges space is totally ordered by inclusion: If $\cA_1$ and $\cA_2$
are de Branges subspaces of $\cB$, then either $\cA_1\subset\cA_2$ or $\cA_2\subset\cA_1$
\cite[Thm.~35]{debranges-book}; see also \cite[Thm.~2.13]{woracek-18}. By definition,
\[
\sub(\cB) \defeq \left\{\cA\subset\cB : \cA \text{ is a de Branges subspace}\right\}
\]
is the maximal chain of (nested) de Branges subspaces of $\cB$. Interestingly, all the elements
in $\sub(\cB)$ are regular if and only if one element is \cite[Remark~2.16]{woracek-18}.

Functions of a regular de Branges space are of exponential type, that is,
\[
\tau(F) \defeq \limsup_{R\to\infty} \frac{\max_{\abs{z}=R}\log\abs{F(z)}}{R} < \infty
	\quad (F\in\cB).
\]
Moreover, the exponential type is uniformly bounded across $\cB=\cB(E)$. Namely,
\[
\tau(\cB) = \sup_{F\in\cB} \tau(F) = \tau(E) < \infty;
\]
see \cite[Thm.~3.4]{kaltwor-05} (also \cite[\S 6.2, Prop.~2]{dym-mckean}). $\cB$ is of normal
type if $\tau(\cB)>0$ or minimal type if $\tau(\cB)=0$.

Of upmost importance in the theory of de Branges spaces is the operator of multiplication
by the independent variable $S:\cD(S)\to\cB$ (sometimes denoted $S_\cB$), that is,
\[
(SF)(z) \defeq zF(z),\quad \dom(S) \defeq \left\{F(z)\in\cB : zF(z)\in\cB\right\}.
\]
This operator is symmetric with deficiency indices $(1,1)$ so it has a one-parameter
family of (canonical) self-adjoint extensions $S^\gamma$, where for convenience we
assume $\gamma\in[0,\pi)$. The domain of $S$ is not necessarily dense in $\cB$ although
its codimension never exceeds 1. This implies that the self-adjoint extensions of $S$
are all operators with at most one possible exception (and this happens if and only if
$\dim{\dom(S)^\perp}=1$),
in which case the exceptional extension is a proper linear relation.
The operator $S$ is also regular\footnote{That is, given $z\in\C$, there exists
$c_z>0$ such that $\norm{(S-zI)F}\ge c_z\norm{F}$  for all $F\in\cD(S)$.}
\cite[Cor.~4.7]{kaltwor-99}, which implies that any spectrum $\spec(S^\gamma)$
consists of isolated eigenvalues of multiplicity equal to one and
\[
\bigcup_{\gamma\in[0,\pi)}\spec(S^\gamma) = \R,\quad
\spec(S^\beta)\cap\spec(S^{\gamma}) = \emptyset\quad (\beta\ne\gamma).
\]
A more detailed recount of these facts (along with references) is in \cite{us-handbook}.

\begin{remark}
\label{rem:why-sampling}
Suppose $S_\cB^\gamma$ is not the exceptional extension of $S_\cB$ (in the event
that $\cD(S_\cB)$ being not densely defined).
Noting that $K_\cB(z,\lambda)$ is (up to a constant factor) the eigenfunction associated with
$\lambda\in\spec(S^\gamma_\cB)$, one readily obtains the so-called sampling formula
\begin{equation}
\label{eq:sampling-formula}
F(z) = \sum_{\lambda\in\sigma(S^\gamma_\cB)}
	\frac{K_\cB(z,\lambda)}{K_\cB(\lambda,\lambda)}F(\lambda)\quad (F\in\cB).
\end{equation}
This formula in fact establishes a bijective correspondence between $\cB$ and the
space of sequences $\ell_2(\cB,\gamma)$, where
\begin{equation*}
\ell_p(\cB,\gamma)
	\defeq \left\{\{\alpha_\lambda\}_{\lambda\in\sigma(S^{\gamma}_\cB)}\subset\C :
		\left\{\frac{\alpha_\lambda}
		{K_\cB(\lambda,\lambda)^{1/2}}\right\}_{\lambda\in\sigma(S^{\gamma}_\cB)}
		\in\ell_p\right\}.
\end{equation*}
One can also say that \eqref{eq:sampling-formula} is stable under weighted $\ell_2$
perturbations of the samples. In passing, we note that \eqref{eq:sampling-formula} converges
in the norm of $\cB$, which implies uniform convergence in compact subsets of $\C$.
\end{remark}

\subsection{Oversampling}

\begin{definition}
\label{def:oversampling}
Given $p\in(2,\infty]$,
a de Branges space $\cB$ has the $\ell_p$-oversampling property relative to $\cA\in\sub(\cB)$
if there exists a function $J_{\cA\cB}:\C\times\C\to\C$ having the following properties:
\begin{enumerate}[label={\bf (o\arabic*)}, ref={(o\arabic*)}]
\item	\label{it:o1}
	For all $w\in\C$, $J_{\cA\cB}(\cdot,w)\in\cB$. Moreover,
	\begin{equation*}
	J_{\cA\cB}(z,w)	=J_{\cA\cB}(w,z)^*
					= J_{\cA\cB}(w^*,z^*).
	\end{equation*}

\item	\label{it:o2}
	The map
	$\ds{\cB\ni F \mapsto \inner{J_{\cA\cB}(\cdot,z)}{F(\cdot)}_\cB\in\text{\rm Hol}(\C)}$
	defines an operator in $\cL(\cB)$. Moreover,
	\[
	\inner{J_{\cA\cB}(\cdot,z)}{F(\cdot)}_\cB = F(z)
	\]
	for all $F\in\cA$.

\item There exists a self-adjoint extension $S_{\cB}^{\gamma}$ of $S_{\cB}$ such that
	\begin{equation*}
	\sum_{\lambda\in\sigma(S_{\cB}^{\gamma})}
	\left(\frac{\abs{J_{\cA\cB}(z,\lambda)}}{K_\cB(\lambda,\lambda)^{1/2}}\right)^{p/(p-1)}
	\end{equation*}
	converges uniformly in compact subsets of $\C$.
	\label{it:o3}
\end{enumerate}
\end{definition}

The second part of \ref{it:o2} is equivalent to the identity
\begin{equation*}
\label{eq:oversampling-general}
F(z) = \sum_{\lambda\in\sigma(S_{\cB}^{\gamma})}
		\frac{J_{\cA\cB}(z,\lambda)}{K_\cB(\lambda,\lambda)}F(\lambda),
\end{equation*}
for every $F\in\cA$ and every self-adjoint extension $S_{\cB}^{\gamma}$ of $S_\cB$
(barring the exceptional one, should $S_\cB$ be not densely defined). The convergence
is in the norm of $\cB$, hence uniform in compact subsets of $\C$.
There are many functions having the properties
\ref{it:o1} and \ref{it:o2}, for instance $K_\cB(z,w)$, $K_\cA(z,w)$ and their convex
linear combinations. What makes oversampling interesting is \ref{it:o3}. Given
$\epsilon\in\ell_p(\cB,\gamma)$ and $F\in\cA$, define
\begin{equation*}
F_\epsilon(z) \defeq
	\sum_{\lambda\in\sigma(S_{\cB}^{\gamma})}
		\frac{J_{\cA\cB}(z,\lambda)}{K_\cB(\lambda,\lambda)}
		\left(F(\lambda)+\epsilon_\lambda\right).
\end{equation*}
Then, for every compact $\K\subset\C$, there exists $C(\K)>0$ such that
\begin{equation*}
\abs{F(z) - F_\epsilon(z)}
	\le C(\K)\norm{\epsilon}_{\ell_p(\cB,\gamma)}\quad
	(z\in\K).
\end{equation*}
In other words, we obtain a sampling formula for elements in $\cA$, using
sampling points associated with $\cB$, that is stable under weighted
$\ell_p$ ---rather than $\ell_2$--- perturbations of the samples
(cf. Remark~\ref{rem:why-sampling}).

The main results of this paper are Corollary~\ref{cor:the-corollary} and 
Theorem~\ref{thm:the-theorem}, which can be summarized (in a somewhat weaker form)
as follows:

\begin{theorem}
\label{thm:oversampling}
Let $\cB$ be a symmetric, regular de Branges space. Let $\cB_l\subset\cB$ be the
de Branges subspace associated to $l\in(0,b]$ (see Theorem~\ref{thm:big-theorem}).
Denote $\tau(l) = \tau(\cB_l)$. Suppose there
exists a closed interval $I\subset (0,b]$ such that $\tau$ is strictly increasing
on $I$ and $\tau'$ is absolutely continuous on $I$. Then,
for every $a$ in the interior of $I$,
$\cB=\cB_b$ has the $\ell_p$-oversampling property
relative to every $\cB_l\in\sub(\cB_a)$ for every $p\in(2,\infty)$. Moreover,
if $\tau$ is a linear function on $I$, then the previous statement also
holds true for $p=\infty$.
\end{theorem}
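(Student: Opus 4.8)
The plan is to verify Definition~\ref{def:oversampling} by exhibiting an explicit kernel $J_{\cA\cB}$ and then reducing everything to the summability condition \ref{it:o3}. Properties \ref{it:o1} and \ref{it:o2} are the cheap part: any kernel of the form $J_{\cA\cB}(z,w)=\int\phi(s)\,\mathcal{E}(z,s)\,\mathcal{E}(w,s)^*\,d\mu(s)$, built from the generalized (de Branges / canonical-system) Fourier transform of $\cB=\cB_b$ together with a real window $\phi$, automatically inherits the Hermitian and conjugation symmetries of the reproducing kernel, defines a bounded Fourier-multiplier operator on $\cB$ whenever $\phi$ is bounded, and reproduces $\cA=\cB_l$ exactly as soon as $\phi\equiv 1$ on the band $[0,l]$ corresponding to $\cB_l$; taking $\supp\phi\subset[0,b]$ guarantees $J_{\cA\cB}(\cdot,w)\in\cB$. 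The entire content therefore moves into the choice of $\phi$ forcing \ref{it:o3}, and the whole difficulty is transferred to the decay of $J_{\cA\cB}(z,\lambda)$ as $\abs{\lambda}\to\infty$ along the real axis.

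First I would set up the transform and the asymptotic dictionary supplied by Theorem~\ref{thm:big-theorem}: the chain parameter $l\in(0,b]$ labels the subspaces $\cB_l$, band-limiting a function to $\cB_l$ means supporting its transform in $[0,l]$, and the type function $\tau(l)$ records the accumulated phase, so that the oscillatory factor $\mathcal{E}(s,\lambda)$ behaves like $e^{i\lambda\theta(s)}$ with $\theta'(s)\asymp\tau'(s)$. I would then fix a convenient nonexceptional extension $S_\cB^\gamma$ whose spectrum has asymptotic density $\tau(b)/\pi$, and record the companion asymptotics of $K_\cB(\lambda,\lambda)$ along that grid. For $a$ in the interior of $I$ and $l\le a$, I would build $\phi$ to equal $1$ on $[0,l]$ and to perform its transition to $0$ within a subinterval of $I$ lying to the right of $l$ (which exists since $l\le a$ and $a$ is interior to $I$), where $\tau$ is strictly increasing with absolutely continuous derivative. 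Strict monotonicity keeps $\theta'$ bounded away from $0$ on $\supp\phi'$, which is precisely what makes the oscillation nondegenerate.

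Next I would estimate $J_{\cA\cB}(z,\lambda)=\int\phi(s)\,\mathcal{E}(z,s)\,\mathcal{E}(\lambda,s)^*\,d\mu(s)$ by nonstationary phase, i.e.\ repeated integration by parts against $e^{i\lambda\theta(s)}$. Each step costs one derivative of the amplitude $\phi$ and of $1/\theta'$ and gains a factor $(\lambda\theta')^{-1}$; because $\tau'$, hence $\theta'$, is absolutely continuous, one may legitimately integrate by parts essentially twice, the boundary terms being killed by the vanishing of $\phi'$ at the endpoints of the transition and by the reproducing cancellation on $[0,l]$. This yields a quantitative Riemann--Lebesgue bound $\abs{J_{\cA\cB}(z,\lambda)}\le C(z)\,o(\abs{\lambda}^{-1})$, uniform on compact sets in $z$. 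Matching this against the growth of $K_\cB(\lambda,\lambda)^{1/2}$ and the grid density from Theorem~\ref{thm:big-theorem} then gives $\sum_\lambda\bigl(\abs{J_{\cA\cB}(z,\lambda)}/K_\cB(\lambda,\lambda)^{1/2}\bigr)^{p/(p-1)}<\infty$ for every $p\in(2,\infty)$, since the conjugate exponent $p/(p-1)$ strictly exceeds $1$; the uniformity in $z$ upgrades this to uniform convergence on compacts, which is \ref{it:o3}.

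Finally, for the endpoint $p=\infty$ one needs $\ell_1$ summability, i.e.\ decay summable against the first power, and the twofold integration by parts no longer suffices. Here I would use that when $\tau$ is linear on $I$ the phase density $\theta'$ is constant there, so amplitude and phase are as smooth as $\phi$ itself; choosing $\phi\in C^\infty$ lets one integrate by parts arbitrarily often and obtain decay faster than any power of $\abs{\lambda}$, comfortably $\ell_1$. The main obstacle, and the step I expect to be most delicate, is exactly the quantitative nonstationary-phase estimate under the sole hypothesis that $\tau'$ is absolutely continuous: one must track how the limited regularity of the phase density caps the admissible number of integrations by parts, control the $z$-dependence uniformly on compacts, and check that the boundary contributions genuinely cancel against the reproducing part. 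It is this regularity threshold that separates the range $p\in(2,\infty)$ from the linear, $p=\infty$ case.
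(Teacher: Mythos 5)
Your skeleton does match the paper's: the kernel you propose, a window $\phi$ multiplied in the canonical-system variable, is exactly the paper's $R_{ab;\omega}=\chi_{[0,a]}+\omega\chi_{(a,b]}$, properties \ref{it:o1}--\ref{it:o2} are indeed immediate from boundedness of the multiplier and $\phi\equiv 1$ on $[0,a]$, and your first integration by parts is the paper's exact identity $(z-w^*)J_{ab;\omega}(z,w)=-\int_a^b u(z,x)^t\cJ u(w^*,x)\,\omega'(x)\,dx$. The genuine gap is in your decay mechanism. The model ``$\mathcal{E}(\lambda,s)\sim e^{i\lambda\theta(s)}$ with fixed phase density $\theta'\asymp\tau'$'' is not available here: in Prüfer form $u(\lambda,x)=R(\lambda,x)\,\xi_{\theta(\lambda,x)}$ with $\theta'=\lambda\left[\tfrac12+g(x)\cos 2\theta\right]$, the phase is \emph{nonlinear} in $\lambda$ and, crucially, the amplitude $R(\lambda,x)$ is $\lambda$-dependent and can grow (in the paper's Airy example $R\sim\abs{\lambda}^{1/6}$), so the claimed bound $\abs{J_{\cA\cB}(z,\lambda)}\le C(z)\,o(\abs{\lambda}^{-1})$ is not what one can, or needs to, prove. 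Moreover your ``second'' integration by parts is not legitimate at the stated regularity: $\tau'$ absolutely continuous gives $g'\in L^1$ only, and $g'$ cannot be differentiated again; even the Riemann--Lebesgue substitute fails in the naive form because the phase is not $\lambda$-linear. What actually closes the argument in the paper is not extra oscillatory gain but an \emph{amplitude/norm cancellation}: Lemma~\ref{lem:bound-on-norm} shows $\norm{u(\lambda,\cdot)}_{L^2_H(b)}\ge C_1 R(\lambda,a)$ together with $R(\lambda,x)\le C\,R(\lambda,a)$ uniformly in $\lambda$ on the transition window --- and this is precisely where the absolute continuity of $\tau'$ enters, via the change of variables $s\mapsto\theta(\lambda,s)$ that linearizes the phase and turns the $g'$ term into a uniformly bounded correction. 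With that lemma, the summand in \eqref{eq:expression-to-bound} is merely $O(\abs{\lambda}^{-1})$, and convergence for $q=p/(p-1)\in(1,2)$ follows from the eigenvalue density of regular de Branges spaces (condition (C2) of Woracek), which is the correct general tool where your asserted density $\tau(b)/\pi$ would need care (e.g.\ in minimal type). Without the amplitude lemma, your ``matching against the growth of $K_\cB(\lambda,\lambda)^{1/2}$'' is exactly the unproven step, and the proposal does not close.

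For the endpoint $p=\infty$ your instinct is right but the route is heavier than necessary and still inherits the same omission. The paper observes that linear $\tau$ forces $H$ to be a constant diagonal matrix on $I$, solves the system explicitly in sines and cosines, and shows (Proposition~\ref{prop:oversampling-restricted-diagonal-system-constant}) that even a merely piecewise-linear window yields one extra power, $O(\abs{\lambda}^{-2})$ after the $\lambda$-dependent coefficients $A^2+B^2$ cancel against the norm lower bound --- giving $\ell_1$ summability directly, with no need for a $C^\infty$ window or superpolynomial decay. Your $C^\infty$-window plan would also work in this constant-coefficient case (there the phase really is $\lambda$-linear), but, as in the general case, the decisive step you yourself flagged as ``most delicate'' is resolved in the paper not by counting integrations by parts against the regularity of the phase, but by the uniform comparison of the Prüfer amplitude with $K_\cB(\lambda,\lambda)^{1/2}$; that comparison is the missing idea in your proposal.
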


\subsection{Background}

The notion of oversampling stems from the theory of Paley-Wiener spaces
\cite{partington-book,zayed-93}, that is, the spaces of Fourier
transforms of functions with given compact support centered at zero,
\begin{equation*}
	\cPW_a \defeq \left\{F(z)=\int_{-a}^a e^{-ixz}\phi(x)dx : \phi\in
	L^2(-a,a)\right\}.
\end{equation*}
The linear set $\cPW_a$, equipped with the norm $\norm{F}=\norm{\phi}$, is
a de Branges space generated by the Hermite-Biehler
function $E_a(z)=\exp(-iza)$.
In this setting, the sampling formula \eqref{eq:sampling-formula} is known as the
Whittaker-Shannon-Kotelnikov theorem and takes the form
\begin{equation}
\label{eq:wsk-sampling}
	F(z) =\sum_{n\in\Z}
		\mathcal{G}_a\left(z,\tfrac{n\pi}{a}\right)F\left(\tfrac{n\pi}{a}\right),
	\qquad
	\mathcal{G}_a\left(z,w\right)
		\defeq\frac{\sin\left[a(z-\cc{w})\right]}{a(z-\cc{w})}.
\end{equation}
The function $\mathcal{G}_a\left(z,w\right)$ is referred to as the sampling
kernel, while the separation between sampling points $\pi/a$ is known as the
Nyquist rate.

As shown in
\cite[Thm. 7.2.5]{partington-book}, every $F\in\cPW_a\subset\cPW_b$ ($a<b$) admits
the representation
\begin{equation}
  \label{eq:wsk-oversampling}
  	F(z)=\sum_{n\in\Z}\mathcal{G}_{ab}\left(z,\tfrac{n\pi}{b}\right)
	F\left(\tfrac{n\pi}{b}\right),
\end{equation}
with the modified sampling kernel
\begin{equation}
\label{eq:oversampling-PW}
\mathcal{G}_{ab}(z,w)
 \defeq \frac{2}{b-a}\frac{\cos((z-\cc{w})a) - \cos((z-\cc{w})b)}{(z-\cc{w})^2}.
\end{equation}
The convergence of \eqref{eq:wsk-sampling} is clearly unaffected by
$\ell_2$-perturbations of the samples $F\left(\frac{n\pi}{a}\right)$. However,
the oversampling formula \eqref{eq:wsk-oversampling} is more stable in
the sense that it converges under $\ell_\infty$-perturbations of the samples. That is, if the
sequence $\{\delta_n\}_{n\in\Z}$ is bounded and one defines
\begin{equation*}
	F_\delta(z)\defeq
	\sum_{n\in\Z}\mathcal{G}_{ab}\left(z,\tfrac{n\pi}{b}\right)
	\left[F\left(\tfrac{n\pi}{b}\right)+\delta_n\right],
\end{equation*}
then $\abss{F_\delta(z)-F(z)}$ is uniformly bounded in compact subsets of $\C$. In fact,
using methods of Fourier analysis, one can prove that \eqref{eq:wsk-oversampling} is
uniformly bounded on the real line.
In the jargon of signal analysis, a more stable sampling formula is obtained at the expense of
collecting samples at a higher Nyquist rate.

Generalizations of the sampling formula \eqref{eq:wsk-sampling} have been a subject
of research mainly within the theory of reproducing
kernel Hilbert spaces \cite{butzer,garcia-13,higgins1,jorgensen-2016};
a classical result is the Kramer's sampling theorem.
The particular case of reproducing kernel Hilbert spaces related
to the Bessel-Hankel transform has been studied by Higgins \cite{higgins0}.
Sampling theorems associated with Sturm-Liouville problems have been discussed in
\cite{zayed1,zayed2,zayed3}. See also \cite{zayed4}.

To the best of our knowledge, oversampling on de Branges spaces besides the Paley-Wiener
class has not been discussed until recently, where this subject has been
touched upon for (regular) de Branges spaces associated with regular Schrödinger
operators \cite{siltoluri-19}. Oversampling on a class of non-regular de Branges spaces,
associated with perturbed Bessel operators on a finite interval, has been shown
in \cite{toluri-20}.

\begin{remark}
By the time we gave the final touches to the present paper, we learned of a recent work
by Baranov et al. \cite{baranov-23}, where a different notion of oversampling 
is discussed for a class of so-called model spaces, among which the de
Branges spaces form a subclass. It is worth remarking that
Definition~\ref{def:oversampling} establishes oversampling as a
geometric property of de Branges spaces. Also, unlike the approach in
\cite{baranov-23}, ours is based on the relation of this kind of
spaces to canonical systems.
\end{remark}

\subsection{Paper's layout}

In Section~\ref{sec:general} we characterize the oversampling property in terms of the
existence of certain bounded operators.
Section~\ref{sec:prelim} summarizes the connection between regular canonical systems and
regular de Branges spaces. The main results of this paper are elaborated
in Section~\ref{sec:main}. Finally, Section~\ref{sec:example} ---complemented with 
the Appendix--- is dedicated to an example of
a symmetric regular de Branges space, having the $\ell_\infty$-oversampling property
but outside the scope of Theorem~\ref{thm:oversampling}.


\section{Some general statements}
\label{sec:general}

The $\ell_p$-oversampling property (of a de Branges space $\cB$ relative to a de Branges
subspace $\cA$) can be portrayed as 
the existence of a suitable deformation of the orthogonal projector onto $\cB\ominus\cA$.

\begin{proposition}
A function $J_{\cA\cB}:\C\times\C\to\C$ obeys \ref{it:o1} and \ref{it:o2}
if and only if
\begin{equation}
\label{eq:J-Omega}
J_{\cA\cB}(z,w) = \inner{K_\cB(\cdot,z)}{(P_{\cA\cB} + \varOmega_{\cA\cB})K_\cB(\cdot,w)}_\cB,
\end{equation}
where $P_{\cA\cB}:\cB\to\cB$ is the orthogonal projector onto $\cA$ and
$\varOmega_{\cA\cB}:\cB\to\cB$ is a bounded, self-adjoint operator such that
\begin{equation*}
\ker(\varOmega_{\cA\cB}) = \ran(P_{\cA\cB}),
\quad \ran(\varOmega_{\cA\cB})\subset \ran(I-P_{\cA\cB})
\end{equation*}
and
\begin{equation}
\label{eq:real-operator}
(\varOmega_{\cA\cB}F)^\#(z) = (\varOmega_{\cA\cB}F^\#)(z) \quad (F\in\cB).
\end{equation}
\end{proposition}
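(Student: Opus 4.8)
The plan is to encode the two pointwise requirements as properties of a single bounded operator and then peel off the projector. Throughout I would work with the conjugation $C\colon\cB\to\cB$, $CF\defeq F^\#$, which is an antiunitary involution (so $\inner{Cf}{Cg}_\cB=\inner{g}{f}_\cB$) because $\cB$ is isometrically invariant under $F\mapsto F^\#$; since $\cA$ is itself a de Branges space it is $C$-invariant, whence $C$ commutes with $P_{\cA\cB}$. I would also use the reproducing identity and the kernel symmetry $CK_\cB(\cdot,w)=K_\cB(\cdot,w^*)$ (immediate from $K_\cB(z,w)=K_\cB(w^*,z^*)$).

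For the forward implication, assume \ref{it:o1}--\ref{it:o2}. By \ref{it:o2}, $(TF)(z)\defeq\inner{J_{\cA\cB}(\cdot,z)}{F}_\cB$ defines $T\in\cL(\cB)$. Feeding $T$ the reproducing kernels and using $J_{\cA\cB}(\cdot,z)\in\cB$ together with the first symmetry of \ref{it:o1}, I would obtain $TK_\cB(\cdot,w)=J_{\cA\cB}(\cdot,w)$ and hence the form identity $J_{\cA\cB}(z,w)=\inner{K_\cB(\cdot,z)}{TK_\cB(\cdot,w)}_\cB$. The relation $J_{\cA\cB}(z,w)=J_{\cA\cB}(w,z)^*$ then states exactly that $T$ is symmetric on the dense span of kernels, giving $T=T^*$; unwinding $J_{\cA\cB}(z,w)=J_{\cA\cB}(w^*,z^*)$ with $CK_\cB(\cdot,w)=K_\cB(\cdot,w^*)$ gives $CT=TC$ on that span, hence everywhere. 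Putting $\varOmega_{\cA\cB}\defeq T-P_{\cA\cB}$ yields \eqref{eq:J-Omega} with $\varOmega_{\cA\cB}$ bounded and self-adjoint, and $CT=TC$ combined with $CP_{\cA\cB}=P_{\cA\cB}C$ gives \eqref{eq:real-operator}. Finally the reproduction clause $TF=F$ ($F\in\cA$) of \ref{it:o2} gives $\ran(P_{\cA\cB})\subseteq\ker(\varOmega_{\cA\cB})$, and self-adjointness upgrades this to $\ran(\varOmega_{\cA\cB})\subseteq\ker(\varOmega_{\cA\cB})^\perp\subseteq\ran(I-P_{\cA\cB})$.

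For the converse I would simply reverse the computation: set $T\defeq P_{\cA\cB}+\varOmega_{\cA\cB}$, observe $J_{\cA\cB}(\cdot,w)=TK_\cB(\cdot,w)\in\cB$ to secure the first part of \ref{it:o1}, derive the two symmetries from $T=T^*$ and from $\inner{Cf}{Cg}_\cB=\inner{g}{f}_\cB$ together with $CT=TC$ (which now follows from \eqref{eq:real-operator} and $CP_{\cA\cB}=P_{\cA\cB}C$), and read off \ref{it:o2} from $\inner{J_{\cA\cB}(\cdot,z)}{F}_\cB=(TF)(z)$, using $\ran(P_{\cA\cB})\subseteq\ker(\varOmega_{\cA\cB})$ to get $TF=F$ on $\cA$.

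The hard part will be the exact matching of $\ker(\varOmega_{\cA\cB})$ with $\ran(P_{\cA\cB})$. The inclusions $\ran(P_{\cA\cB})\subseteq\ker(\varOmega_{\cA\cB})$ and $\ran(\varOmega_{\cA\cB})\subseteq\ran(I-P_{\cA\cB})$ fall out for free from the reproduction property and self-adjointness, and these are all that the ``if'' direction actually consumes; by contrast, forcing $\ker(\varOmega_{\cA\cB})$ to be \emph{no larger} than $\ran(P_{\cA\cB})$ is the substantive point, and I would check it with care. For instance $J_{\cA\cB}=K_\cA$ satisfies \ref{it:o1}--\ref{it:o2} yet produces the (forced, since $T=P_{\cA\cB}+\varOmega_{\cA\cB}$ is determined on the dense span of kernels) choice $\varOmega_{\cA\cB}=0$, whose kernel is all of $\cB$; one should therefore expect the kernel clause to be read as the inclusion $\ker(\varOmega_{\cA\cB})\supseteq\ran(P_{\cA\cB})$ --- equivalently, as a constraint singling out the admissible deformations $\varOmega_{\cA\cB}$ --- rather than as an automatic equality.
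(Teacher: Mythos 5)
Your proof is correct, and it takes a more operator-theoretic route than the paper's. The paper argues pointwise: in the ``only if'' direction it splits $J_{\cA\cB}(\cdot,w)=J_1(\cdot,w)+J_2(\cdot,w)$ by the projection theorem, identifies $J_1=K_\cA$ from the reproducing clause of \ref{it:o2} together with uniqueness of reproducing kernels on $\cA$, and then defines $\varOmega_{\cA\cB}$ directly by $(\varOmega_{\cA\cB}F)(z)=\inner{J_2(\cdot,z)}{F}_\cB$ on $\cB\ominus\cA$, asserting boundedness, self-adjointness and \eqref{eq:real-operator} from \ref{it:o1} and \ref{it:o2} without further detail; the ``if'' direction is the same direct computation you give. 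You instead lift everything to the operator level at the outset: extract $T\in\cL(\cB)$ from \ref{it:o2}, verify $TK_\cB(\cdot,w)=J_{\cA\cB}(\cdot,w)$, deduce $T=T^*$ and $CT=TC$ from the two symmetries in \ref{it:o1} via density of the span of kernels, and set $\varOmega_{\cA\cB}=T-P_{\cA\cB}$. The content matches ($P_{\cA\cB}T$ corresponds to $J_1$ and $(I-P_{\cA\cB})T$ to $J_2$), but your version makes explicit the steps the paper compresses into ``clearly'' --- notably the self-adjointness and reality of $\varOmega_{\cA\cB}$, which in the paper rest on an unspoken kernel-density argument --- and it gives you, for free, the uniqueness of $\varOmega_{\cA\cB}$.

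Your closing observation is also correct and flags a genuine (though harmless) defect in the statement: the equality $\ker(\varOmega_{\cA\cB})=\ran(P_{\cA\cB})$ cannot be delivered by the ``only if'' direction. Your example $J_{\cA\cB}=K_\cA$ satisfies \ref{it:o1} and \ref{it:o2}, and by the uniqueness just mentioned it forces $\varOmega_{\cA\cB}=0$, whose kernel is all of $\cB$; so for proper $\cA$ no admissible $\varOmega_{\cA\cB}$ with kernel exactly $\cA$ can represent it via \eqref{eq:J-Omega}. Consistently, the paper's own proof only establishes the inclusions $\ran(P_{\cA\cB})\subseteq\ker(\varOmega_{\cA\cB})$ and $\ran(\varOmega_{\cA\cB})\subseteq\ran(I-P_{\cA\cB})$, which are precisely what your argument proves and what the ``if'' direction consumes; reading the kernel clause as an inclusion, as you propose, is the correct fix, and nothing downstream is affected, since the paper subsequently only uses that $\varOmega_{\cA\cB}\in\cL(\cB\ominus\cA)$ is real, self-adjoint, and vanishes on $\cA$.
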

\begin{proof}
($\impliedby$) Let $\Omega_{\cA\cB}$ be an operator as stated and define $J_{\cA\cB}$
according to \eqref{eq:J-Omega}. Clearly \ref{it:o1} holds true. Note that
\[
J_{\cA\cB}(z,w) = K_{\cA}(z,w) + J_2(z,w)
\]
with $J_2(z,w) = \inner{K_\cB(\cdot,z)}{\varOmega_{\cA\cB} K_\cB(\cdot,w)}_\cB$ so
$\inner{J_2(\cdot,z)}{F}_\cB = \inner{K_\cB(\cdot,z)}{\varOmega_{\cA\cB}F}_\cB$.
In other words,
\[
\inner{J_{\cA\cB}(\cdot,z)}{F(\cdot)}_\cB = ((P_{\cA\cB} + \varOmega_{\cA\cB}) F)(z),
\]
which implies \ref{it:o2}.

($\implies$) Due to \ref{it:o1} and the projection theorem, one has
\[
J_{\cA\cB}(z,w) = J_{1}(z,w) + J_2(z,w)
\]
with $J_{1}(\cdot,w)\in\cA$ and $J_{2}(\cdot,w)\in\cB\ominus\cA$. Using \ref{it:o2},
one readily obtains $J_{1}(z,w) = K_{\cA}(z,w)$. Finally, define $\varOmega_{\cA\cB}$ by the
rule
\[
(\varOmega_{\cA\cB}F)(z) = \inner{J_{2}(\cdot,z)}{F}_\cB, \quad F\in\cB\ominus\cA.
\]
Clearly $\varOmega_{\cA\cB}$ is a bounded operator. Due to \ref{it:o1}, it
is self-adjoint and satisfies \eqref{eq:real-operator}.
\end{proof}

In view of the preceding result, we can say that $\cB$ has the $\ell_p$-oversampling property
relative to $\cA$ if and only if there exists a real (in the sense \eqref{eq:real-operator}),
self-adjoint operator $\varOmega_{\cA\cB}\in\cL(\cB\ominus\cA)$ such that
\begin{equation*}
\sum_{\lambda\in\sigma(S_{\cB}^{\gamma})}\left(
	\frac{\abs{\inner{K_\cB(\cdot,z)}{(P_{\cA\cB}+\varOmega_{\cA\cB})K_\cB(\cdot,\lambda)}_\cB}}
	{K_\cB(\lambda,\lambda)^{1/2}}\right)^{p/(p-1)}
\end{equation*}
converges uniformly in compact subsets of $\C$, for all self-adjoint extensions
$S_{\cB}^{\gamma}$ of $S_\cB$ (with possibly one exception).

\begin{remark}
It will be useful to keep in mind the following straightforward facts:
\begin{enumerate}
\item \label{easy-0}
	Let $U$ be a unitary transformation between de Branges spaces $\cB$ and $\cB'=U\cB$.
	Suppose $\cB$ has the $\ell_p$-oversampling property relative to $\cA\in\sub(\cB)$.
	Then $\cB'$ has the $\ell_p$-oversampling property relative to $\cA'=U\cA$.

\item \label{easy-2}
	Let $\cA,\cB\in\sub(\cC)$ with $\cA\subset\cB$. Suppose $\cC$ has the
	$\ell_p$-oversampling property relative to $\cB$. Then $\cC$ has the $\ell_p$-oversampling
	property relative to $\cA$. \qedhere
\end{enumerate}
\end{remark}


\section{Canonical systems and de Branges spaces}
\label{sec:prelim}

\subsection{Canonical systems in a nutshell}

The following is taken mostly from Remling's book \cite{remling-book}.
Given $l\in(0,\infty)$, consider a differential equation of the form
\begin{equation}
\label{eq:canonical-system-base}
\cJ y'(x) = -z H(x)y(x),\quad
\cJ = \begin{pmatrix}
		0 & -1 \\[1mm] 1 & 0
	\end{pmatrix},\quad
x\in(0,l),\quad z\in\C,
\end{equation}
where $H:(0,l)\to\R^{2\times 2}$ is an integrable, essentially non-zero,
non-negative (hence symmetric) matrix-valued function. That is, for (Lebesgue) almost every
$x\in (0,l)$,
\[
H(x) =  \begin{pmatrix}
		h_1(x) & h_3(x) \\[1mm]
		h_3(x) & h_2(x)
		\end{pmatrix}\!,
\quad H(x)\ne 0,
\quad v^*H(x)v\ge 0\text{ for all } v\in\C^{2\times 2},
\]
and $\int_0^l \abs{h_i(x)} dx < \infty$. Without loss of generality,
we can assume that $\tr H(x) = 1$ a.e. $x\in(0,l)$  \cite{winkler-12}.
The class of $2\times 2$ matrix-valued functions having all the properties listed above
will be called $\cC(l)$.

A maximal open interval $I\subset[0,l]$ is singular (or $H$-indivisible)  if
\[
H(x) = \xi_\phi \xi_\phi^t
	\quad \text{a.e. } x\in I,
	\quad \xi_\phi = \begin{pmatrix}
						\cos\phi \\[1mm] \sin\phi
					 \end{pmatrix},
\]
where the constant $\phi\in[0,\pi)$ is the type of the singular interval.

Let $L^2_H(l) = L^2_H(0,l)$ be the linear set of equivalence classes of
measurable vector-valued functions $f:(0,l)\to\R^{2}$ such that
\[
\int_0^l f(x)^*H(x)f(x)dx < \infty;
\]
$L^2_H(l)$ becomes a Hilbert space once endowed with the inner
product
\[
\inner{f}{g}_{L^2_H(l)} \defeq \int_0^l f(x)^*H(x)g(x)dx.
\]

Let $A_l$ be the following linear relation associated with \eqref{eq:canonical-system-base},
\[
A_l \defeq \left\{\begin{gathered}
		\{f,g\}\in L^2_H(l)\oplus L^2_H(l): f \text{ has a representative } \mathring{f}
		\in \text{AC}[0,l] \text{ such that}
		\\[1mm]
		\cJ \mathring{f}'(x) = - H(x)g(x)\text{ a.e. }x\in(0,l) \text{ and }
		\mathring{f}_2(0) = 0
		\end{gathered}\right\}.
\]
The representative $\mathring{f}$ in the definition above is unique if $(0,l)$ does
not consists of a single singular interval \cite[Lem.~2.1]{remling-book}.

Define
\[
A_{l,0} \defeq \left\{\{f,g\}\in A_l: \mathring{f}(l)=0\right\}.
\]
This relation is closed and symmetric with deficiency indices $(1,1)$.
Its self-adjoint extensions are
\[
A^\gamma_{l} = \left\{\{f,g\}\in A_l :
				\mathring{f}_1(l)\sin\gamma -\mathring{f}_2(l)\cos\gamma = 0\right\},
\quad \gamma\in[0,\pi).
\]

\subsection{Associated de Branges spaces}

Let $u(z,x)$ be the solution to the initial-value problem given
by \eqref{eq:canonical-system-base} with initial condition
$u(z,0)=(\begin{smallmatrix}1\\0\end{smallmatrix})$ on a given interval $(0,b)$
which we henceforth assume that it is not a single singular interval, hence $u(z,x)$
is unique.
Also, $u(z,\cdot)\in\ker(A_l-z)$ for every $l\in(0,b]$ and $u(\cdot,x)$ is an entire
function such that $u(z^*,x)^* = u(z,x)^t$.

As shown in \cite[Thm.~4.6]{remling-book},
\[
E_l(z) \defeq u_1(z,l) - i u_2(z,l)
\]
is an Hermite-Biehler function for every $l\in(0,b]$. The corresponding de Branges
space $\cB(E_l)$ has reproducing kernel
\begin{equation}
\label{eq:reproducing-kernel}
K_l(z,w)
	= \int_0^l u(z,x)^t H(x) u(w^*,x) dx
	=  \frac{u(z,l)^t\cJ u(w^*,l)}{z-w^*} \quad (w\ne z^*).
\end{equation}
Moreover, $\cB(E_l) = \cB_l$ isometrically (see \cite[Thm.~4.7]{remling-book}), where
\[
\cB_l \defeq \left\{F(z) = \int_0^l u(z,x)^t H(x) f(x) dx : f\in\cc{\dom(A_{l})}\right\},\quad
	\norm{F}_{\cB_l} \defeq \norm{f}_{L^2_H(l)}.
\]

Some structural properties of regular de Branges spaces are related to properties
of a canonical system associated with it. The following theorem collects relevant
results about this relation; except for minor modifications these statements are taken from
Theorems~4.8, 4.26, 5.21 and 6.11 of \cite{remling-book}.

\begin{theorem}
\label{thm:big-theorem}
Let $\cB=\cB(E)$ be a regular de Branges space with $E(0)=1$.
\begin{enumerate}[label={(\roman*)}]
\item There exists $b\in(0,\infty)$ and $H\in\cC(b)$ such that $E(z)=E_b(z)$.
	The pair $\{b,H\}$ is unique modulo an initial singular interval of type $\pi/2$.

\item In addition, $\cB$ is symmetric if and only if $H(x)$ is diagonal for a.e. $x\in(0,b)$.

\item Suppose $(0,b]$ does not begin with a singular interval of type $\pi/2$. Then,
	\[
	\sub(\cB)=\{\cB(E_l) : l\in(0,b] \text{ is a regular point of } H\}.
	\]

\item For each $l\in(0,b]$, $E_l$ ---hence $\cB(E_l)$--- is of exponential type
	with type
	\[
	\tau(l) = \int_0^l \sqrt{\det H(x)}dx.
	\]
\end{enumerate}
\end{theorem}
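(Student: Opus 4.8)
The plan is to prove the four assertions separately, in the stated order, since the symmetry criterion \emph{(ii)} will lean on the uniqueness clause of \emph{(i)}. Throughout I would work inside the dictionary set up in Section~\ref{sec:prelim}: the entire solution $u(z,x)$, the Hermite--Biehler functions $E_l(z)=u_1(z,l)-iu_2(z,l)$, the isometry $\cB(E_l)=\cB_l$, and the kernel formula \eqref{eq:reproducing-kernel}. Each part is the translation into this language of a theorem on canonical systems, so I would cite \cite{remling-book} (Thms.~4.8, 4.26, 5.21, 6.11) for the deep content while supplying the short computations that make the translation explicit.

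For \emph{(i)} I would invoke the inverse correspondence between (chains of) de Branges spaces and trace-normalized Hamiltonians: $\cB$ is realized by some $H\in\cC(b)$ with $E=E_b$. Regularity of $\cB$ means its functions have finite exponential type, as recalled in the introduction, and this is exactly what forces the length $b$ to be finite, consistently with the formula in \emph{(iv)}. For the uniqueness clause the key computation is that on an initial singular interval of type $\pi/2$ one has $H=\mathrm{diag}(0,1)$, so the canonical equation together with the initial datum $u(z,0)=(1,0)^t$ forces $u(z,\cdot)\equiv(1,0)^t$ on that interval; hence prepending or deleting such an interval does not change $E_l$ for $l$ past it, and one checks this is the \emph{only} alteration of $\{b,H\}$ with that effect.

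For \emph{(ii)} I would give a direct reflection argument resting on \emph{(i)}. With $N=\mathrm{diag}(1,-1)$ one verifies $\cJ N\cJ=N$, so $v(z,x)\defeq Nu(-z,x)$ solves the canonical system with Hamiltonian $NHN$ and the same initial datum $(1,0)^t$. Its endpoint function is $\tilde E_b(z)=u_1(-z,b)+iu_2(-z,b)=E_b^\#(-z)$ (the $u_j(\cdot,x)$ being real entire). Since $\cB$ is symmetric exactly when $E_b^\#(z)=E_b(-z)$, symmetry is equivalent to $\tilde E_b=E_b$, and by the uniqueness in \emph{(i)} to $NHN=H$ a.e., i.e.\ to the vanishing of the off-diagonal entry $h_3$---that is, $H$ diagonal. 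An initial type-$\pi/2$ interval is itself diagonal and fixed by $N$, so it introduces no exception.

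Parts \emph{(iii)} and \emph{(iv)} I would import from the canonical-systems theory. For \emph{(iii)}, the chain $\sub(\cB)$, totally ordered by the results quoted in the introduction, matches the increasing family $\{\cB_l\}$ under the transform; the regular points of $H$ are precisely the $l$ at which truncation yields a genuine member of the chain, interior points of indivisible intervals being collapsed, and the hypothesis forbidding an initial type-$\pi/2$ interval removes the degeneracy at the left endpoint. For \emph{(iv)}, the exponential type equals the Krein--de Branges length $\int_0^l\sqrt{\det H}$: indivisible intervals carry $\det H=0$ and thus contribute nothing, while on the complement this density governs the growth of the transfer matrix. The main obstacle is that \emph{(i)} and \emph{(iv)} are not elementary---they are the substance of the de Branges/Krein inverse problem and of the type formula---so a self-contained treatment would have to reproduce that machinery; instead I would cite Remling and concentrate the care on matching the normalizations ($\tr H=1$, $E(0)=1$) and on the indivisible-interval bookkeeping on which the uniqueness clause of \emph{(i)} and all of \emph{(iii)} depend.
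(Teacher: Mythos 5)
The paper gives no proof of this theorem: it is presented explicitly as a compilation of results quoted, with minor modifications, from Theorems~4.8, 4.26, 5.21 and 6.11 of \cite{remling-book}, which is precisely your plan, and your supplementary translation computations are correct (on an initial singular interval of type $\pi/2$ one indeed has $H=\mathrm{diag}(0,1)$ and $u(z,\cdot)\equiv(1,0)^t$, and the reflection $v(z,x)=Nu(-z,x)$ with $N=\mathrm{diag}(1,-1)$ correctly reduces (ii) to the uniqueness clause of (i), the initial type-$\pi/2$ interval being $N$-invariant). Your proposal is therefore correct and takes essentially the same route as the paper.
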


\begin{remark}
\begin{enumerate}
\item If $\cB$ is symmetric, then so is every subspace in $\sub(\cB)$.

\item From now on, we write $\cB_b\defeq\cB$, $S_l\defeq S_{\cB_l}$, 
	and $K_l(z,w)\defeq K_{\cB_l}(z,w)$.

\item Note that $\tau$ is strictly increasing on an interval $[x_1,x_2]\subset[0,b]$
	if $H(x)$ is diagonal and has no singular intervals within $[x_1,x_2]$.\qedhere
\end{enumerate}
\end{remark}

The correspondence between the linear relation $A_{l,0}$ and the operator of multiplication
$S_l$ in $\cB_l$ is established as follows (see \cite[Thm.~4.13]{remling-book}).

\begin{theorem}
Let $U:L^2_H(l)\to\cB_l$ be the partial isometry defined by the rule
\begin{equation*}
(Uf)(z) \defeq \int_0^l u(z,x)^t H(x) f(x) dx.
\end{equation*}
Then, $S_l = UA_{l,0}U^*$. Moreover, $\dim{\cD(S_l)^\perp}=1$ if and only if
$(0,l)$ ends with a singular interval $(c,l)$ of type $\omega^\perp$ for some
$\omega\in[0,\pi)$. If so, the exceptional extension $S_l^\omega$ is a proper linear relation,
\begin{equation*}
\dom(S_l^\omega) = \dom(S_l),\quad\text{ and }\quad \cD(S_l)^\perp = \lspan(s^\omega_l(z)),
\end{equation*}
where
\begin{equation*}
s^\omega_l(z)
	= -u_1(z,l)\sin\omega + u_2(z,l)\cos\omega
	= \frac{i}{2}\left(e^{i\omega}E_l(z) - e^{-i\omega}E_l^\#(z) \right).
\end{equation*}
\end{theorem}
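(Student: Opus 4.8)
The plan is to move between the two Hilbert-space pictures through $U$, transporting the differential relation $A_{l,0}$ to the multiplication relation $S_l$, and then to read off the defect of $\dom(S_l)$ from the behaviour of the canonical system near the right endpoint $l$.

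First I would check that $U$ is a partial isometry. By the isometric identification $\cB_l = U(\cc{\dom(A_l)})$ recorded above, $U$ is isometric on $\cc{\dom(A_l)}$ and maps it onto $\cB_l$; it remains to see that $U$ annihilates the orthogonal complement. Using $u(z^*,x)^* = u(z,x)^t$ one writes $(Uf)(z) = \inner{u(z^*,\cdot)}{f}_{L^2_H(l)}$, and since $u(z,\cdot)\in\Ker(A_l - z)\subset\dom(A_l)$ for every $z$, any $f\perp\cc{\dom(A_l)}$ gives $(Uf)(z)=0$ identically. Hence $\Ker U = (\cc{\dom(A_l)})^\perp$, so $U$ is a partial isometry with initial space $\cc{\dom(A_l)}$ and final space $\cB_l$; in particular $UU^* = I_{\cB_l}$ and $U^*U$ is the orthogonal projection onto $\cc{\dom(A_l)}$.

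Next I would establish $S_l = U A_{l,0}U^*$. Take $\{f,g\}\in A_{l,0}$ and set $F = Uf$, $G = Ug$. Substituting $H(x)g(x) = -\cJ\mathring f'(x)$ into the integral defining $G(z)$ and integrating by parts, the boundary term at $x=l$ vanishes because $\mathring f(l)=0$, while at $x=0$ it vanishes because $u(z,0) = (1,0)^t$ and $\mathring f_2(0)=0$. Differentiating the transpose of the canonical system gives $\partial_x u(z,x)^t = -z\,u(z,x)^t H(x)\cJ$, and with $\cJ^2 = -I$ the surviving integral collapses to $z\int_0^l u(z,x)^t H(x)f(x)\,dx = zF(z)$. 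Thus $\{F,zF\}\in S_l$, i.e.\ $U A_{l,0}U^*\subseteq S_l$. Since $A_{l,0}$ is a closed symmetric relation with deficiency indices $(1,1)$ and $U$ is a partial isometry with the initial and final spaces just identified, $U A_{l,0}U^*$ is again closed, symmetric and of deficiency $(1,1)$; as $S_l$ is also symmetric with deficiency $(1,1)$ and a symmetric relation admits no proper symmetric extension of the same deficiency, equality follows. (Alternatively one reverses the computation, using surjectivity of $U$ onto $\cB_l$ and uniqueness of the absolutely continuous representative to produce, for each $F\in\dom(S_l)$, a preimage $\{f,g\}\in A_{l,0}$.)

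Finally I would locate $\cD(S_l)^\perp$. Because $U$ is isometric from $\cc{\dom(A_l)}$ onto $\cB_l$ and $\dom(S_l) = U(\dom(A_{l,0}))$, one gets $\cD(S_l)^\perp = U\bigl(\cc{\dom(A_l)}\ominus\cc{\dom(A_{l,0})}\bigr)$, so $\dim\cD(S_l)^\perp$ equals the codimension of $\cc{\dom(A_{l,0})}$ in $\cc{\dom(A_l)}$; that is, it measures how much of the endpoint condition $\mathring f(l)=0$ survives in the $L^2_H(l)$-closure. This is the technical heart of the statement, and the step I expect to be the main obstacle, since it is precisely here that the structure at $x=l$ must be exploited: if $(0,l)$ does not end with an indivisible interval the endpoint is effectively regular and the constraint is lost under closure, so the codimension is $0$; if $(0,l)$ ends with an indivisible interval $(c,l)$ of type $\phi$, then on $(c,l)$ one has $H = \xi_\phi\xi_\phi^t$ and the explicit solution $u(z,x) = u(z,c) + z\,(\xi_\phi^t u(z,c))(x-c)\,\xi_{\phi^\perp}$ shows that the $\xi_\phi$-component is frozen, $\xi_\phi^t u(z,x)=\xi_\phi^t u(z,c)$. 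Testing $U$ against the one-dimensional indivisible direction $h = \psi\,\xi_\phi\,\chi_{(c,l)}$ then yields $(Uh)(z) = \bigl(\int_c^l\psi\bigr)\,\xi_\phi^t u(z,l)$, so $\cD(S_l)^\perp$ is one-dimensional and spanned by $\xi_\phi^t u(z,l) = -u_1(z,l)\sin\omega + u_2(z,l)\cos\omega$ with $\omega = \phi^\perp$, i.e.\ the terminal type is $\omega^\perp$. Substituting $E_l = u_1 - iu_2$ and $E_l^\# = u_1 + iu_2$ (the components $u_1,u_2$ being real-entire since $u(z^*,l)^* = u(z,l)^t$) reduces this to the stated closed form $s^\omega_l = \tfrac{i}{2}(e^{i\omega}E_l - e^{-i\omega}E_l^\#)$. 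The remaining assertions—that the exceptional extension $S_l^\omega$ has $\dom(S_l^\omega) = \dom(S_l)$ and is a proper linear relation with multivalued part $\cD(S_l)^\perp = \lspan(s^\omega_l)$—then follow from the standard description of self-adjoint relations, the multivalued part being the orthogonal complement of the (unchanged) domain.
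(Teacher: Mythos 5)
The paper does not actually prove this theorem: it is quoted, with only cosmetic changes, from \cite[Thm.~4.13]{remling-book}, so your proposal can only be judged on internal correctness. Your outline captures the right mechanisms, and the computations you do carry out are correct: writing $(Uf)(z)=\inner{u(z^*,\cdot)}{f}_{L^2_H(l)}$ and using $u(z,\cdot)\in\Ker(A_l-z)\subset\dom(A_l)$ gives $\Ker U=(\cc{\dom(A_l)})^\perp$; the integration by parts with $\partial_x u(z,x)^t=-z\,u(z,x)^tH(x)\cJ$ and the vanishing boundary terms (via $\mathring{f}(l)=0$ and $u_2(z,0)=0=\mathring{f}_2(0)$) yields $UA_{l,0}U^*\subseteq S_l$; on a terminal indivisible interval $(c,l)$ of type $\phi$ the component $\xi_\phi^t u(z,\cdot)$ is indeed frozen, so $h=\psi\,\xi_\phi\chi_{(c,l)}$ gives $(Uh)(z)=\bigl(\int_c^l\psi\bigr)\,\xi_\phi^t u(z,l)$, and $\xi_\phi^t u(z,l)=s_l^\omega(z)$ for $\phi=\omega^\perp$. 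One line you omit but should add: $Uh\perp\dom(S_l)$ holds because, for $f\in\dom(A_{l,0})$, $\xi_\phi^t\mathring{f}$ is constant on $(c,l)$ and vanishes at $l$, whence $\inner{h}{f}_{L^2_H(l)}=0$; combined with the fact recorded in the paper's introduction that $\dim\cD(S_l)^\perp\le1$, this settles the ``if'' direction and the formula for $s_l^\omega$.

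Two genuine gaps remain. First, the claim that $UA_{l,0}U^*$ is ``again closed, symmetric and of deficiency $(1,1)$'' because $U$ is a partial isometry is false as a general principle: compression by a partial isometry preserves none of these properties. It is rescued here only by structure you never invoke, namely $\Ker U=(\cc{\dom(A_l)})^\perp=\operatorname{mul}(A_{l,0})$ (using $A_{l,0}^*=A_l$ and the duality $\operatorname{mul}(T^*)=(\cc{\dom(T)})^\perp$), so that $U$ annihilates exactly the multivalued part of $A_{l,0}$, which is orthogonal to $\dom(A_{l,0})$; then $UA_{l,0}U^*$ is the operator part of $A_{l,0}$ transported unitarily, and one must still check that passing to the operator part does not alter the deficiency indices. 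Without this, the maximality argument (equal deficiency forces equality) has no premise. Second, the converse implication --- no terminal indivisible interval implies $\cc{\dom(S_l)}=\cB_l$ --- is exactly the half you yourself call the ``technical heart'', and ``the constraint is lost under closure'' is an assertion, not an argument. What is needed: by your own reduction, $\cD(S_l)^\perp=U\bigl(\cc{\dom(A_l)}\cap\operatorname{mul}(A_l)\bigr)$; an element of $\operatorname{mul}(A_l)$ is carried by an absolutely continuous representative $\mathring{f}$ with $H\mathring{f}=0$ a.e.\ and $\cJ\mathring{f}'=-Hg$, so nonzero $\mathring{f}$ can live only on indivisible intervals; on interior indivisible intervals continuity forces $\mathring{f}$ to vanish at both endpoints, and the resulting $g$ satisfies $Ug=0$, so only a terminal indivisible interval can produce a nonzero defect vector. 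Relatedly, you assert but never verify that the exceptional boundary parameter is precisely $\gamma=\omega$: this needs the short check that for $\phi=\omega^\perp$ the pair $\{0,g\}$ with $g=\beta'\xi_\phi\chi_{(c,l)}$, $\beta(c)=0$, satisfies the boundary condition defining $A_l^\omega$ automatically (since $\mathring{f}=\beta\,\xi_{\phi^\perp}$ is parallel to $\xi_\omega$) and has $Ug=\beta(l)\,s_l^\omega\ne0$, while every other $\gamma$ forces $\beta(l)=0$, so that $S_l^\omega=UA_l^\omega U^*$ is the unique extension that is a proper relation, with $\dom(S_l^\omega)=\dom(S_l)$ and multivalued part $\lspan(s_l^\omega)$.
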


Let $S_{b}^\gamma$ denote a canonical selfadjoint extension of $S_b$. Clearly,
$S_{b}^\gamma = UA_{b}^\gamma U^*$.  Assuming that $S_{b}^\gamma$ is not exceptional,
one has the identity
\[
F(z)
	= \sum_{\lambda\in\spec(S_{b}^\gamma)}
		\frac{\inner{u(z^*,\cdot)}{u(\lambda,\cdot)}_{L^2_H(b)}}
		{\norm{u(\lambda,\cdot)}^2_{L^2_H(b)}}\inner{u(\lambda,\cdot)}{f}_{L^2_H(b)}
	= \sum_{\lambda\in\spec(S_{b}^\gamma)}\frac{K_b(z,\lambda)}{K_b(\lambda,\lambda)}F(\lambda),
\]
for every $F\in\cB_b$, where $f\in L^2_H(b)$ obeys $F(z)=\inner{u(z^*,\cdot)}{f}_{L^2_H(b)}$.


\section{Main Results}
\label{sec:main}

Let $a\in(0,b)$ be a regular point of $H$, and consider $F\in\cB_a$. Then
$F(z)=\inner{u(z^*,\cdot)}{f}_{L^2_H(a)}$ for some $f\in L^2_H(a)$. Extend
$f$ to a function in $L^2_H(b)$ by setting $f$ equivalent to 0 in $(a,b)$.
Let $\omega:[a,b]\to\R$ be an absolutely continuous function such that $\omega(a) = 1$ and
$\omega(b) = 0$. Define
\[
R_{ab;\omega}(x) \defeq \chi_{[0,a]}(x) + \omega(x)\chi_{(a,b]}(x).
\]
Then,
\[
F(z)
	= \sum_{\lambda\in\spec(S_{b}^{\gamma})}
		\frac{\inner{u(z^*,\cdot)}
		{R_{ab;\omega}(\cdot)u(\lambda,\cdot)}_{L^2_H(b)}}{K_b(\lambda,\lambda)}F(\lambda).
\]
For $\cB_b$ to have the $\ell_p$-oversampling property relative to $\cB_a$ with
$p\in(2,\infty]$, it suffices to show that
\[
\sum_{\lambda\in\spec(S_{b}^\gamma)}\left(\frac{1}{K_b(\lambda,\lambda)^{1/2}}
		\abs{\inner{u(z^*,\cdot)}{R_{ab;\omega}(\cdot)u(\lambda,\cdot)}_{L^2_H(b)}}\right)^{q}
		<\infty,
\]
with $q\in[1,2)$,
uniformly on compact sets of $\C$. From now on, we denote
\[
J_{ab;\omega}(z,w) \defeq \inner{u(z^*,\cdot)}{R_{ab;\omega}(\cdot)u(w^*,\cdot)}_{L^2_H(b)}.
\]

If $w\ne 0$, then
\[
\begin{aligned}
J_{ab;\omega}(z,w)
	=&\, \int_0^a u(z,x)^t H(x) u(w^*,x) dx
		+ \int_a^b u(z,x)^t H(x) u(w^*,x) \omega(x) dx
	\\[1mm]
	=&\, -\frac{1}{w^*}\int_0^a u(z,x)^t \cJ u'(w^*,x) dx
		- \frac{1}{w^*}\int_a^b u(z,x)^t \cJ u'(w^*,x) \omega(x) dx.
\end{aligned}
\]
Since
\[
\int_0^a u(z,x)^t \cJ u'(w^*,x) dx
	= u(z,a)^t \cJ u(w^*,a) - z \int_0^a u(z,x)^t H(x) u(w^*,x) dx
\]
and
\[
\begin{aligned}
\int_a^b \omega(x)u(z,x)^t \cJ u'(w^*,x) dx
	=&\, - u(z,a)^t \cJ u(w^*,a) - \int_a^b u(z,x)^t \cJ u(w^*,x) \omega'(x) dx
	\\[1mm]
	 &\, - z \int_a^b u(z,x)^t H(x) u(w^*,x) \omega(x) dx,
\end{aligned}
\]
one has
\[
(z-w^*)J_{ab;\omega}(z,w)
	= - \int_a^b u(z,x)^t \cJ u(w^*,x) \omega'(x) dx.
\]
Thus, if $z\in \K$ (a compact subset of $\C$) and $N$ is sufficiently large,
\begin{equation}
\label{eq:expression-to-bound}
\sum_{\substack{\lambda\in\sigma(S_{b}^{\gamma}) \\ \abs{\lambda}> N}}
		\left(\frac{\abs{J_{ab;\omega}(z,\lambda)}}{K_b(\lambda,\lambda)^{1/2}}
		\right)^q
	= \sum_{\substack{\lambda\in\sigma(S_{b}^{\gamma}) \\ \abs{\lambda}> N}}
		\left(\frac{\abs{\int_a^b u(z,x)^t \cJ u(\lambda,x) \omega'(x) dx}}
		{\abs{z-\lambda}\norm{u(\lambda,\cdot)}_{L^2_H(b)}}\right)^q,
\end{equation}
so we only have to deal with the convergence of this last expression.

\begin{remark}
It is interesting to see how $J_{ab;\omega}(z,w)$ looks like when it is written in terms of
elements of the corresponding de Branges space. Clearly,
\[
J_{ab;\omega}(z,w)
	 = - \int_a^b K_x(z,w)\omega'(x) dx
\]
as it follows from \eqref{eq:reproducing-kernel}.
\end{remark}

\begin{proposition}
\label{prop:oversampling-restricted-diagonal-system-constant}
Suppose $H$ has an interval of regular points $[x_1,x_2]$ where
\begin{equation}
\label{eq:diagonal-constant-matrix}
H(x) = \begin{pmatrix} \frac12 + g_0 & 0 \\[1mm] 0   & \frac12 - g_0  \end{pmatrix},
	\quad g_0\in(-1/2,1/2).
\end{equation}
Given $a\in[x_1,x_2)$ and $c\in(a,x_2]$, define
\begin{equation}
\label{eq:omega}
\omega(x)\defeq \begin{dcases}
	\frac{c-x}{c-a}, &x\in[a,c],
	\\
	0, &x\in(c,b].
		   \end{dcases}
\end{equation}
Then,
\begin{equation*}
\sum_{\lambda\in\spec(A_{b}^\gamma)}
	\frac{\abs{J_{ab;\omega}(z,\lambda)}}{K_b(\lambda,\lambda)^{1/2}}
	< \infty
\end{equation*}
for every $\gamma\in[0,\pi)$ (barring the exceptional extension), uniformly on compact
subsets of $\C$.
\end{proposition}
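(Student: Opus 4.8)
The plan is to exploit that on $[x_1,x_2]$ the coefficient matrix $H$ is constant, so the canonical system is explicitly solvable and the solution $u(z,\cdot)$ is, on this interval, a bounded oscillatory vector whose frequency grows linearly in $z$. First I would specialize the identity derived just above the statement to the present $\omega$ from \eqref{eq:omega}. Since $\omega'(x)=-(c-a)^{-1}$ on $(a,c)$ and vanishes on $(c,b]$, one gets, for each eigenvalue $\lambda\in\spec(A_b^\gamma)\subset\R$ (recall $u(\lambda,\cdot)$ is real-valued for real $\lambda$),
\[
(z-\lambda)J_{ab;\omega}(z,\lambda)=\frac{1}{c-a}\int_a^c u(z,x)^t\cJ u(\lambda,x)\,dx .
\]
Thus everything reduces to controlling this single integral over the constant interval and comparing it with $K_b(\lambda,\lambda)$.

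Next I would write the transfer matrix explicitly. Writing $h_1=\tfrac12+g_0$, $h_2=\tfrac12-g_0$, $k(z)=z\sqrt{\det H}$ with $\det H=h_1h_2$, and $s=x-a$, the equations $u_1'=-zh_2u_2$, $u_2'=zh_1u_1$ give $u(z,x)=T(s;z)u(z,a)$ on $[a,x_2]$ with
\[
T(s;z)=\begin{pmatrix}\cos k(z)s & -\sqrt{h_2/h_1}\,\sin k(z)s\\[1mm]\sqrt{h_1/h_2}\,\sin k(z)s & \cos k(z)s\end{pmatrix}.
\]
Two features of $T$ drive the argument. For $z$ in a fixed compact set $\K$ its entries are bounded uniformly in $s\in[0,c-a]$, while in $T(s;\lambda)$ they oscillate at frequency $\asymp\abs{\lambda}$. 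Substituting $u(z,x)=T(s;z)u(z,a)$ and $u(\lambda,x)=T(s;\lambda)u(\lambda,a)$ turns the integral into $u(z,a)^tM(z,\lambda)u(\lambda,a)$, where $M(z,\lambda)=\int_0^{c-a}T(s;z)^t\cJ T(s;\lambda)\,ds$ has entries that, via the product-to-sum formulas, are oscillatory integrals with denominators $k(\lambda)\pm k(z)$. Hence $\norm{M(z,\lambda)}=O(\abs{\lambda}^{-1})$ uniformly for $z\in\K$, and together with $\abs{z-\lambda}\ge\abs{\lambda}/2$ for $\abs{\lambda}$ large this yields $\abs{J_{ab;\omega}(z,\lambda)}\le C_\K\,\abs{u(\lambda,a)}\,\abs{\lambda}^{-2}$.

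The decisive second feature is a conservation law: since $H$ is constant, $\frac{d}{ds}(T(s;\lambda)^tHT(s;\lambda))=0$, so $T(s;\lambda)^tHT(s;\lambda)=H$ for all $s$. Consequently
\[
K_b(\lambda,\lambda)\ge\int_a^c u(\lambda,x)^tH u(\lambda,x)\,dx=(c-a)\,u(\lambda,a)^tHu(\lambda,a)\ge(c-a)(\tfrac12-\abs{g_0})\,\abs{u(\lambda,a)}^2,
\]
whence $K_b(\lambda,\lambda)^{1/2}\ge c_0\abs{u(\lambda,a)}$. The factor $\abs{u(\lambda,a)}$ therefore cancels between numerator and denominator, leaving $\abs{J_{ab;\omega}(z,\lambda)}/K_b(\lambda,\lambda)^{1/2}\le C_\K'\,\abs{\lambda}^{-2}$. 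Summing, I would split off the finitely many $\lambda$ with $\abs{\lambda}\le N$ (a finite, continuous partial sum, hence bounded on $\K$) and bound the tail by $C_\K'\sum_{\abs{\lambda}>N}\lambda^{-2}$, which converges because $\spec(A_b^\gamma)$ has finite upper density (the number of eigenvalues in $[-r,r]$ is $O(r)$, as $\cB_b$ has finite exponential type $\tau(b)$). Uniformity on compacts follows since $C_\K'$ depends only on $\max_{z\in\K}\abs{u(z,a)}$.

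The main obstacle, and the crux of the proposition, is precisely this exact cancellation of $\abs{u(\lambda,a)}$: a priori $\abs{u(\lambda,a)}$ may grow with $\lambda$ according to $H$ on $[0,a]$, and without matching growth of the kernel norm the series need not converge. It is the constancy of $H$ on $[a,c]$, through $T^tHT=H$, that lower-bounds $K_b(\lambda,\lambda)$ by exactly the right quantity. A secondary, more technical point is making the $O(\abs{\lambda}^{-1})$ decay of $M(z,\lambda)$ uniform over $z\in\K$, which the explicit trigonometric form of $T$ supplies.
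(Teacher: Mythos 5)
Your proposal is correct and follows essentially the same route as the paper: the paper also solves the system explicitly on the constant interval (writing $u(z,x)$ in the form $A\cos(z\kappa x)+B\sin(z\kappa x)$ etc.), gets the $1/\abs{z-\lambda}$ oscillatory decay from $\int_a^c u(z,x)^t\cJ u(\lambda,x)\,dx$, lower-bounds $\norm{u(\lambda,\cdot)}_{L^2_H(b)}^2$ by $(c-a)$ times the constant $u(\lambda,a)^tHu(\lambda,a)$ so that the $\lambda$-dependent amplitude cancels, and concludes via $\sum_\lambda (1+\lambda^2)^{-1}<\infty$ (cited from Remling, Thm.~3.4, where you instead invoke Jensen-type zero density --- an equally valid justification). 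Your transfer-matrix formulation with the conservation law $T^tHT=H$ is just a cleaner bookkeeping of the same computation; if anything, it is more careful than the paper's writing, which conflates the coefficients $A(z),B(z)$ with $A(\lambda),B(\lambda)$ in the displayed cross-integral.
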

\begin{proof}
Set $a\in[x_1,x_2)$ and $c\in(a,x_2]$. Within $[a,c]$
equation \eqref{eq:canonical-system-base} becomes
\[
\begin{aligned}
	- y'_1(x) &= z \left(\tfrac12 - g_0\right) y_2(x)
	\\[1mm]
	  y'_2(x) &= z \left(\tfrac12 + g_0\right) y_1(x)
\end{aligned}
\]
with boundary condition
\(
u(z,a)
\). The solution within this interval is
\[
u(z,x) =
	\begin{pmatrix}
	A\cos(z\kappa x) + B\sin(z\kappa x)
	\\
	\alpha A\sin(z\kappa x) - \alpha B\cos(z\kappa x)
	\end{pmatrix},
\]
where $A$ and $B$ are defined by the boundary condition at $x=a$,
\begin{equation*}
\alpha = \sqrt{\frac{1 + 2 g_0}{1 - 2 g_0}},\quad\text{and}\quad
\kappa = \frac12\sqrt{1 - 4 g_0^2}.
\end{equation*}
As a consequence,
\[
\norm{u(\lambda,\cdot)}_{L^2_H(b)}^2
	\ge \int_{a}^{c} u(\lambda,x)^t H(x) u(\lambda,x) dx
	= \left(\tfrac12 + g_0\right) \left(A^2 + B^2\right)  (c - a).
\]
Also,
\[
\begin{aligned}
\int_{a}^{c} u(z,x)^t \cJ u(\lambda,x) dx
	&= \alpha \left(A^2 + B^2\right) \int_{a}^{c} \sin\left((z-\lambda)\kappa x\right) dx
	\\[1mm]
	&= - \frac{\alpha \left(A^2 + B^2\right)}{(z-\lambda)\kappa }
		\left.\cos\left((z-\lambda)\kappa x \right)\right|_{a}^{c}.
\end{aligned}
\]
Therefore, for sufficiently large $N$,
\begin{equation*}
\sum_{\substack{\lambda\in\sigma(A_{b}^{\gamma}) \\ \abs{\lambda}> N}}
		\frac{\abs{\int_a^c u(z,x)^t \cJ u(\lambda,x) dx}}
		{\abs{z-\lambda}\norm{u(\lambda,\cdot)}_{L^2_H(b)}}
	\le C(\K) \sum_{\lambda\in\sigma(A_b^\gamma)} \frac{1}{1+\lambda^2} < \infty,
\end{equation*}
where the convergence follows from \cite[Thm.~3.4]{remling-book}.
\end{proof}

In what follows, if $I=[x_1,x_2]$, we use the notation $I_\circ$ 
for the half-closed interval of the form $[x_1,x_2)$.

\begin{corollary}
\label{cor:the-corollary}
Let $\cB$ be a symmetric, regular de Branges space.
Denote $\tau(l) = \tau(\cB_l)$.
Suppose $\tau$ is a (non constant) linear function
on a closed interval $I\subset (0,b]$. 
Then, $\cB=\cB_b$ has the $\ell_{\infty}$-oversampling property relative 
to every $\cB_l\in\sub(\cB_a)$ whenever $a\in I_\circ$.
\end{corollary}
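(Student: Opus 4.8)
The plan is to deduce the $\ell_\infty$ statement (for which the relevant exponent in \ref{it:o3} is $q=p/(p-1)=1$) from the summability already furnished by Proposition~\ref{prop:oversampling-restricted-diagonal-system-constant}, after extending its computation from a genuinely constant $H$ to the situation forced by the hypotheses. First I would reduce the target: by item~\ref{easy-2} of the Remark in Section~\ref{sec:general} it suffices to prove that $\cB_b$ has the $\ell_\infty$-oversampling property relative to $\cB_a$ itself. By Theorem~\ref{thm:big-theorem}(ii) symmetry forces $H=\operatorname{diag}(h_1,h_2)$ with $h_1+h_2=1$, and by Theorem~\ref{thm:big-theorem}(iv) one has $\tau'(x)=\sqrt{\det H(x)}=\sqrt{h_1(x)h_2(x)}$ a.e. Linearity of $\tau$ on $I$ therefore means $\sqrt{h_1h_2}=\kappa$ is a positive constant a.e.\ on $I$; together with $h_1+h_2=1$ this pins $(h_1,h_2)$, at a.e.\ point of $I$, to either $(\tfrac12+g_0,\tfrac12-g_0)$ or its transpose, with $g_0=\tfrac12\sqrt{1-4\kappa^2}$. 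In particular $\det H$ never vanishes on $I$, so $I$ contains no singular interval, every $l\in I$ is a regular point, and the choice $a\in I_\circ$, $c\in(a,x_2]$ with $\omega$ as in \eqref{eq:omega} is legitimate. Properties \ref{it:o1}--\ref{it:o2} of $J_{ab;\omega}$ hold exactly as in the discussion preceding the Proposition, so everything comes down to the $q=1$ convergence of \eqref{eq:expression-to-bound}, of which only the tail $\abs{\lambda}>N$ matters.

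If $H$ happens to equal \eqref{eq:diagonal-constant-matrix} identically on $[a,c]$ we are done by the Proposition. In general I would linearise the system via the substitution $V(z,x)=\bigl(\sqrt{h_1}\,u_1(z,x),\sqrt{h_2}\,u_2(z,x)\bigr)^t$. Since $H$ is diagonal with $\det H=\kappa^2$, one has the pointwise identities $u(z,x)^t\cJ u(\lambda,x)=\kappa^{-1}V(z,x)^t\cJ V(\lambda,x)$ and $u(z,x)^tH(x)u(\lambda,x)=V(z,x)^tV(\lambda,x)$; the latter gives $\norm{u(\lambda,\cdot)}_{L^2_H([a,c])}^2=\int_a^c\abs{V(\lambda,x)}^2dx$. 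Writing $W\defeq\kappa\,u(z)^t\cJ u(\lambda)$ and $\Phi\defeq u(z)^tHu(\lambda)$, the Lagrange identity gives $W'=(z-\lambda)\kappa\Phi$ globally, while on each interval where $H$ is constant the relation $u'=z\cJ Hu$ together with $H\cJ H=(\det H)\cJ=\kappa^2\cJ$ yields $\Phi'=-(z-\lambda)\kappa W$ (equivalently, $V$ solves the free system $V'=z\kappa\cJ V$ there). The point is that $W$ is absolutely continuous throughout, whereas $\Phi$ jumps only across the interfaces $x_j$ separating the two constant values of $H$.

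Integrating $W=-\Phi'/[(z-\lambda)\kappa]$ piece by piece then gives
\[
\int_a^c u(z)^t\cJ u(\lambda)\,dx
=-\frac{1}{\kappa^2(z-\lambda)}\Bigl(\Phi(c)-\Phi(a)-\sum_j\bigl(\Phi(x_j^+)-\Phi(x_j^-)\bigr)\Bigr),
\]
with $\Phi(x_j^+)-\Phi(x_j^-)=\pm2g_0\bigl(u_1(z,x_j)u_1(\lambda,x_j)-u_2(z,x_j)u_2(\lambda,x_j)\bigr)$. This is precisely the mechanism of the Proposition: the extra factor $(z-\lambda)^{-1}$, combined with the prefactor in \eqref{eq:expression-to-bound}, yields $\abs{J_{ab;\omega}(z,\lambda)}/K_b(\lambda,\lambda)^{1/2}\lesssim\abs{z-\lambda}^{-2}$ once the boundary and interface terms are dominated by $\norm{u(\lambda,\cdot)}_{L^2_H(b)}$. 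For that domination I would use that, for real $\lambda$, $\frac{d}{dx}\abs{V(\lambda,\cdot)}^2=2\lambda\kappa\,V(\lambda)^t\cJ V(\lambda)=0$ on each piece, so $\abs{V(\lambda,\cdot)}$ is constant (amplitude $c_j$ on piece $j$), while across an interface $V\mapsto DV$ with $D\in SL_2(\R)$ diagonal, so $\abs{V}$ changes only by the fixed factor $\alpha=\sqrt{(1+2g_0)/(1-2g_0)}$ of the Proposition or its reciprocal. Hence each interface value $\abs{V(\lambda,x_j)}$ is comparable to the amplitude of an adjacent piece, and since $\norm{u(\lambda,\cdot)}_{L^2_H([a,c])}^2=\sum_j c_j^2\ell_j$ with lengths $\ell_j$ bounded below, Cauchy--Schwarz bounds $\sum_j\abs{V(\lambda,x_j)}$ by a constant multiple of $\norm{u(\lambda,\cdot)}_{L^2_H(b)}$, uniformly in $\lambda$. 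With $\abs{u(z,\cdot)}$ bounded for $z$ in a compact $\K$, this gives $\abs{J_{ab;\omega}(z,\lambda)}/K_b(\lambda,\lambda)^{1/2}\le C(\K)(1+\lambda^2)^{-1}$ for large $\abs{\lambda}$, and $\sum_{\lambda\in\spec(S_b^\gamma)}(1+\lambda^2)^{-1}<\infty$ by \cite[Thm.~3.4]{remling-book}, establishing \ref{it:o3} with $q=1$ uniformly on $\K$ and for every non-exceptional $\gamma$.

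The step I expect to be the main obstacle is exactly this last domination, i.e.\ controlling the interface corrections by the global norm uniformly in $\lambda$. In the constant case it is trivial (a single piece, $\norm{u(\lambda)}^2_{L^2_H([a,c])}=$ amplitude$^2\times$length); what lets it survive the alternation of the two values of $H$ is that the interface maps lie in $SL_2(\R)$, which both keeps $\abs{V}$ comparable across interfaces and, crucially, keeps $W=\kappa\,u^t\cJ u$ continuous, so the oscillatory cancellation responsible for the $(z-\lambda)^{-1}$ gain is preserved. The sketch as stated presumes finitely many interfaces with lengths bounded below; for a merely measurable alternation of the two constant values an additional approximation argument would be needed, which I would treat separately.
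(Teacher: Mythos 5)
Your main line is exactly the paper's: the paper also reduces via Theorem~\ref{thm:big-theorem}(ii) to the trace-normalized diagonal form \eqref{eq:our-H}, uses Theorem~\ref{thm:big-theorem}(iv) to convert linearity of $\tau$ on $I$ into constancy of $\det H$ there, and then simply invokes Proposition~\ref{prop:oversampling-restricted-diagonal-system-constant} (the reduction to all of $\sub(\cB_a)$ via item~\ref{easy-2} is likewise how the corollary's statement is meant to be read). So in the case where $g$ has constant sign on $I$, your argument and the paper's coincide, and your verification of \ref{it:o1}--\ref{it:o2} and of the $q=1$ tail of \eqref{eq:expression-to-bound} is correct.

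Where you diverge, you have in fact put your finger on a real subtlety: linearity of $\tau$ only yields $\tfrac14-g(x)^2=\kappa^2$ a.e., i.e.\ $g(x)\in\{g_0,-g_0\}$ a.e., whereas the paper's proof passes from ``$\det H$ is a non-zero constant'' to ``$g\equiv g_0$'' without comment, silently fixing the sign. Your transfer-matrix treatment of the alternating case is sound as far as it goes: the identities $V'=z\kappa\cJ V$ on constant pieces, $W'=(z-\lambda)\kappa\Phi$ globally, $\Phi'=-(z-\lambda)\kappa W$ piecewise, the jump formula for $\Phi$, the fact that the interface maps $\mathrm{diag}(\alpha^{-1},\alpha)$ have determinant one, and the Cauchy--Schwarz domination $\sum_j c_j\lesssim\norm{u(\lambda,\cdot)}_{L^2_H(b)}$ all check out when there are finitely many interfaces of length bounded below, and they reproduce precisely the $\abs{z-\lambda}^{-2}$ decay of the Proposition. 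However, as you yourself concede, this does not close the corollary as stated: the hypothesis ``$\tau$ linear on $I$'' admits, e.g., $g=g_0$ on a fat Cantor subset of $I$ and $g=-g_0$ on its complement, where there is no interval decomposition, the interface sum becomes an integration against $dg$ requiring a bounded-variation property $g$ need not have, and Lemma~\ref{lem:bound-on-norm} (hence Theorem~\ref{thm:the-theorem}) cannot be substituted since it demands $g$ absolutely continuous. So, read as a complete proof, your proposal has a genuine gap in the measurable-alternation case --- but it is exactly the gap already present in the paper's one-line deduction; your finite-interface analysis partially fills it, and the fully general case is covered by neither argument.
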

\begin{proof}
Let $H\in\cC(b)$ be the coefficient matrix associated with $\cB$ according to
Theorem~\ref{thm:big-theorem}(i). In view of Theorem~\ref{thm:big-theorem}(ii),
we can write
\begin{equation}
\label{eq:our-H}
H(x) = \begin{pmatrix} \frac12 + g(x) & 0 \\[1mm] 0   & \frac12 - g(x)  \end{pmatrix},
	\quad g\in L^1(0,b),\quad g(x)\in[-1/2,1/2].
\end{equation}
Let $I=[x_1,x_2]\subset(0,b]$ be the interval where $\tau$ is a linear function.
It follows that $\det H$ is a non-zero constant in there so $g\equiv g_0 \in(-1/2,1/2)$.
Now apply
Proposition~\ref{prop:oversampling-restricted-diagonal-system-constant}.
\end{proof}

Oscillation theory will be useful in what follows \cite{remscar}.
Assuming $z=\lambda\in\R$, let $R(\lambda,x)>0$ and
$\theta(\lambda,x)$ be absolutely continuous real-valued functions such that
$u(\lambda,x) = R(\lambda,x) \xi_{\theta(\lambda,x)}$.
In terms of these functions, and using the parametrization \eqref{eq:our-H} for $H$ (which
can always be done), the initial-value problem
\begin{equation}
\label{eq:ivp-canonical}
\cJ u'(x)= -\lambda H(x) u(x), \quad u(\lambda,0)
			= \begin{pmatrix} 1 \\ 0 \end{pmatrix},
\end{equation}
becomes
\begin{align}
\theta'(x)
	&= \lambda \left[\tfrac12 + g(x)\cos 2\theta(x)\right],\quad \theta(\lambda,0) = 0,
		\label{eq:ivp-theta}
\\[1mm]
R'(x)
	&= \lambda R(x) g(x) \sin 2\theta(x),\quad R(\lambda,0) = 1.
		\nonumber
\end{align}
Moreover, the boundary condition
\[
u_1(\lambda,b)\sin\gamma - u_2(\lambda,b)\cos\gamma = 0
\]
translates into
\begin{equation}
\label{eq:eigenvalues-on-theta}
\theta(\lambda,b) = \gamma \mod \pi,
\end{equation}
that is, $\lambda\in\R$ is an eigenvalue if and only if there exists $n\in\Z$
such that
\(
\theta(\lambda,b) = \gamma + n\pi.
\)

\begin{lemma}
\label{lem:bound-on-norm}
Suppose there exists an interval $[a,c]\subset(0,b]$ such that $g|_{[a,c]}$
is absolutely continuous 
and $g(x)\in[-1/2+\delta,1/2-\delta]$ for some $\delta>0$ within this interval. Then,
\[
\norm{u(\lambda,\cdot)}_{L^2_H(b)} \ge C_1 R(\lambda,a)
\]
for some positive constant $C_1$ (independent of the spectral parameter).
\end{lemma}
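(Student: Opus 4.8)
The plan is to bound the full norm from below by the contribution of the subinterval $[a,c]$ alone, and then to control how the integrand $u(\lambda,x)^{t}H(x)u(\lambda,x)$ varies across $[a,c]$ relative to its value at the left endpoint. Writing $P(\lambda,x)\defeq u(\lambda,x)^{t}H(x)u(\lambda,x)$ and using the polar decomposition $u(\lambda,x)=R(\lambda,x)\xi_{\theta(\lambda,x)}$ together with the parametrization \eqref{eq:our-H}, one has
\[
P(\lambda,x)=R(\lambda,x)^{2}\Bigl(\tfrac12+g(x)\cos 2\theta(\lambda,x)\Bigr).
\]
Since $H\ge 0$, dropping the parts of the integral outside $[a,c]$ only decreases it, so $\norm{u(\lambda,\cdot)}_{L^2_H(b)}^{2}\ge\int_a^c P(\lambda,x)\,dx$, and it suffices to produce a lower bound for $P$ on $[a,c]$ in terms of $R(\lambda,a)^{2}$.

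The first step is to compute $P'$ directly from \eqref{eq:ivp-canonical}. Because $\cJ$ is antisymmetric, the two terms arising from differentiating the factors $u$ cancel identically, leaving only the contribution of $H'$; a short calculation gives
\[
P'(\lambda,x)=g'(x)\,R(\lambda,x)^{2}\cos 2\theta(\lambda,x),
\qquad\text{hence}\qquad
\frac{P'(\lambda,x)}{P(\lambda,x)}=\frac{g'(x)\cos 2\theta(\lambda,x)}{\tfrac12+g(x)\cos 2\theta(\lambda,x)}.
\]
The crucial feature is that this logarithmic derivative no longer contains the spectral parameter $\lambda$ explicitly.

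The second step invokes the hypothesis $g(x)\in[-1/2+\delta,1/2-\delta]$, which forces $\tfrac12+g\cos 2\theta\ge\delta>0$ throughout $[a,c]$; in particular $P>0$ there, so $\log P$ is well defined and $\abs{(\log P)'}\le\abs{g'}/\delta$. Integrating from $a$ to $x$ and using the absolute continuity of $g$ (so that $g'\in L^1[a,c]$) yields
\[
P(\lambda,x)\ge P(\lambda,a)\exp\!\left(-\frac{1}{\delta}\int_a^c\abs{g'(s)}\,ds\right),\qquad x\in[a,c].
\]
Applying the same lower bound $\tfrac12+g\cos 2\theta\ge\delta$ at $x=a$ gives $P(\lambda,a)\ge\delta\,R(\lambda,a)^{2}$. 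Combining these with $\norm{u(\lambda,\cdot)}_{L^2_H(b)}^{2}\ge\int_a^c P\,dx$ produces
\[
\norm{u(\lambda,\cdot)}_{L^2_H(b)}^{2}\ge(c-a)\,\delta\,R(\lambda,a)^{2}\exp\!\left(-\frac{1}{\delta}\int_a^c\abs{g'}\right),
\]
and taking square roots gives the claim with $C_1=\bigl((c-a)\delta\bigr)^{1/2}\exp\!\bigl(-\tfrac{1}{2\delta}\int_a^c\abs{g'}\bigr)$, which is manifestly independent of $\lambda$.

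The only point that genuinely requires care is the positivity of $P$, which underpins the passage to the logarithmic derivative: this is exactly where the strict two-sided bound $g\in[-1/2+\delta,1/2-\delta]$ (rather than merely $g\in[-1/2,1/2]$) is indispensable. Everything else is a direct integration. I expect no difficulty with the spectral parameter, since the cancellation in $P'$ removes $\lambda$ from the relevant estimate; in particular the argument is valid for every real $\lambda$ simultaneously, and no separate treatment of large versus small $\lambda$ is needed.
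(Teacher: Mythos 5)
Your proof is correct, and it arrives at the paper's estimate by a genuinely different derivation of what turns out to be the same underlying identity. The paper works with the Prüfer radius: after bounding $\norm{u(\lambda,\cdot)}^2_{L^2_H(b)}\ge C_0\int R(\lambda,x)^2dx$, it must show that the factor $\exp\bigl(\lambda\int_a^x g(s)\sin 2\theta(\lambda,s)\,ds\bigr)$ in \eqref{eq:R-is-bounded} is uniformly bounded in $\lambda$, and it does so by inverting the angle equation \eqref{eq:ivp-theta} (which requires $\lambda\ne 0$ and strict monotonicity of $\theta(\lambda,\cdot)$), changing variables $s\mapsto\theta$, and splitting the integrand into an exact $\theta$-derivative plus a remainder carrying $g'$. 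You instead differentiate the quadratic form $P=u^tHu$ directly and use the antisymmetry of $\cJ$ to cancel the $\lambda$-dependent terms, getting $(\log P)'=g'\cos 2\theta/(\tfrac12+g\cos 2\theta)$ almost everywhere and hence $\abs{(\log P)'}\le\abs{g'}/\delta$. These encode the same fact: multiplying the paper's final display by $\lambda$ and exponentiating yields precisely $P(\lambda,x)=P(\lambda,a)\exp\int_a^x g'\cos 2\theta/(\tfrac12+g\cos 2\theta)$, i.e.\ your formula. What your route buys: no inversion of $\theta$, no implicit case distinction at $\lambda=0$ (the paper's change of variables is stated ``for every $\lambda\ne 0$''), and an explicit constant $C_1=\bigl((c-a)\delta\bigr)^{1/2}\exp\bigl(-\tfrac{1}{2\delta}\int_a^c\abs{g'}\bigr)$; both arguments consume the hypotheses identically, with absolute continuity of $g$ giving $g'\in L^1[a,c]$ and the two-sided bound $g\in[-1/2+\delta,1/2-\delta]$ giving $\tfrac12+g\cos 2\theta\ge\delta$, which keeps $P$ positive and the logarithmic derivative controlled. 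Two small points worth making explicit if you write this up: the product rule for $P$ holds a.e.\ with $P$ absolutely continuous on $[a,c]$ (solutions $u$ are AC and $H$ is AC there), which is all the integration step needs; and the cancellation $u^tH\cJ Hu=0$ uses that $u(\lambda,\cdot)$ is real-valued for real $\lambda$ --- exactly the regime where the polar decomposition $u=R\,\xi_{\theta}$ is defined, and the only regime the lemma is invoked in, since it is applied at eigenvalues of self-adjoint extensions.
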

\begin{proof}
Our hypothesis on $g$ implies
\[
\norm{u(\lambda,\cdot)}^2_{L^2_H(b)}
	\ge \int_a^b R(\lambda,x)^2 \left[\tfrac12 + g(x)\cos 2\theta(\lambda,x)\right] dx
	\ge C_0 \int_a^b R(\lambda,x)^2 dx
\]
for some positive constant $C_0$. Since
\begin{equation}
\label{eq:R-is-bounded}
R(\lambda,x)
	= R(\lambda,a) e^{\lambda\int_a^x g(s) \sin 2\theta(\lambda,s)ds},
\end{equation}
it suffices to prove that the exponential factor is uniformly bounded with respect to the
spectral parameter.

In view of \eqref{eq:ivp-theta}, $\theta(\lambda, s)$ is invertible with respect to $s$
for every $\lambda\ne 0$. If $s(\lambda,\theta)$ is this  inverse, then
\[
\int_a^x g(s) \sin 2\theta(\lambda,s)ds
	= \frac{1}{\lambda} \int_{\theta(\lambda,a)}^{\theta(\lambda,x)}
		\frac{2 g(s(\lambda,\theta))\sin 2\theta}{1+ 2g(s(\lambda,\theta))\cos 2\theta} d\theta.
\]
But
\[
\frac{2 g(s(\lambda,\theta))\sin 2\theta}{1+ 2g(s(\lambda,\theta))\cos 2\theta}
	= -\frac12\frac{d}{d\theta}\log\left(1+ 2g(s(\lambda,\theta))\cos 2\theta\right)
		+ \frac{\frac{d}{d\theta}g(s(\lambda,\theta))\cos 2\theta}
		{1+ 2g(s(\lambda,\theta))\cos 2\theta}
\]
and
\[
\frac{d}{d\theta}g(s(\lambda,\theta))
	= \frac{2 g'(s(\lambda,\theta)}{\lambda \left(1+ 2g(s(\lambda,\theta))\cos 2\theta\right)}.
\]
Thus,
\begin{multline*}
\int_a^x g(s) \sin 2\theta(\lambda,s)ds
	= \frac{1}{\lambda}\log
		\left(\frac{1+ 2g(a)\cos 2\theta(\lambda,a)}{1+ 2g(x)\cos 
		2\theta(\lambda,x)}\right)^{1/2}
	\\[1mm]
	+ \frac{1}{\lambda}
		\int_a^x\frac{g'(x) \cos 2\theta(\lambda,x)}{1 + 2g(x)\cos 2\theta(\lambda,x)} dx.
\end{multline*}
From this equality the uniform boundedness of \eqref{eq:R-is-bounded} follows.
\end{proof}

\begin{theorem}
\label{thm:the-theorem}
Let $\cB$ be a symmetric, regular de Branges space.
Denote $\tau(l) = \tau(\cB_l)$. Suppose $\tau$ is strictly increasing
on a closed interval $I\subset (0,b]$ and $\tau'$ is absolutely continuous on $I$. Then,
$\cB=\cB_b$ has the $\ell_p$-oversampling property relative to
every $\cB_l\in\sub(\cB_a)$ for every $p\in(2,\infty)$ and $a\in I_\circ$.
\end{theorem}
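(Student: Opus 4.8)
The plan is to verify property \ref{it:o3} for the function $J_{ab;\omega}$ constructed in Section~\ref{sec:main}; properties \ref{it:o1} and \ref{it:o2} are free from that construction, and the reduction carried out there shows it suffices to bound the tail sum in \eqref{eq:expression-to-bound} with exponent $q = p/(p-1) \in (1,2)$, uniformly on compact sets $\K \subset \C$. As in Proposition~\ref{prop:oversampling-restricted-diagonal-system-constant}, I would take $\omega$ to be the piecewise-linear weight \eqref{eq:omega} supported on a subinterval $[a,c] \subset I$, so that $\omega' \equiv -1/(c-a)$ there and the integral defining $J_{ab;\omega}(z,\lambda)$ localizes to $[a,c]$.

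First I would translate the hypotheses on $\tau$ into hypotheses on $g$. By Theorem~\ref{thm:big-theorem}(iv) and the parametrization \eqref{eq:our-H}, $\tau'(l) = \sqrt{\det H(l)} = \sqrt{\tfrac14 - g(l)^2}$. Strict monotonicity of $\tau$ together with continuity of $\tau'$ forces $\tau' > 0$ on a dense open subset of $I$, i.e.\ $g$ stays away from $\pm\tfrac12$ there, while absolute continuity of $\tau'$ is what produces absolute continuity of $g$ on a subinterval where $g$ is in addition bounded away from $\pm\tfrac12$. Consequently I can select $[a,c] \subset I$ (after replacing $a$ by a nearby $a' \ge a$ should $\tau'(a) = 0$, which is harmless: oversampling relative to $\cB_{a'} \supset \cB_a$ descends to $\cB_a$ and to every $\cB_l \in \sub(\cB_a)$ by the monotonicity of oversampling, item~\ref{easy-2}) on which the hypotheses of Lemma~\ref{lem:bound-on-norm} are met.

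The core estimate then combines two bounds in the Prüfer variables $u(\lambda,x) = R(\lambda,x)\xi_{\theta(\lambda,x)}$. On one hand, Lemma~\ref{lem:bound-on-norm} gives $K_b(\lambda,\lambda)^{1/2} = \norm{u(\lambda,\cdot)}_{L^2_H(b)} \ge C_1 R(\lambda,a)$. On the other hand, bounding the numerator crudely, with no oscillatory cancellation, I would use $\abs{u(z,x)^t \cJ u(\lambda,x)} \le \norm{u(z,x)}\,R(\lambda,x)$, the uniform boundedness of $\norm{u(z,x)}$ for $(z,x) \in \K \times [a,c]$, and the uniform-in-$\lambda$ bound $R(\lambda,x) \le C' R(\lambda,a)$ coming from \eqref{eq:R-is-bounded}. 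These combine to $\abs{J_{ab;\omega}(z,\lambda)} \le C(\K)\,R(\lambda,a)/\abs{z-\lambda}$, so the factor $R(\lambda,a)$ cancels and, for $\abs{\lambda} > N$ with $N$ depending on $\K$, one obtains the clean bound $\abs{J_{ab;\omega}(z,\lambda)}/K_b(\lambda,\lambda)^{1/2} \le C(\K)/\abs{z-\lambda} \le 2C(\K)/\abs{\lambda}$.

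Finally I would sum: since $\cB_b$ has positive exponential type $\tau(b)$, the eigenvalues in $\spec(S_b^\gamma)$ have linear counting density (cf.\ \cite[Thm.~3.4]{remling-book}), so $\sum_{\abs{\lambda}>N}\abs{\lambda}^{-q} < \infty$ precisely for $q > 1$; the finitely many remaining terms are bounded on $\K$. This yields the uniform convergence of \eqref{eq:expression-to-bound} for every $q \in (1,2)$, i.e.\ every $p \in (2,\infty)$, and item~\ref{easy-2} promotes the conclusion from $\cB_a$ to all of $\sub(\cB_a)$. I expect the main obstacle to be the first step, namely extracting a subinterval on which Lemma~\ref{lem:bound-on-norm} genuinely applies (absolute continuity together with the two-sided bound on $g$) and organizing the descent that handles points where $\tau'$ vanishes. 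Conceptually, the reason $p=\infty$ is excluded here, unlike in Corollary~\ref{cor:the-corollary}, is that the crude numerator bound yields only $\abs{z-\lambda}^{-1}$ decay instead of the $\abs{z-\lambda}^{-2}$ available in the constant-$g$ case where the oscillatory integral is evaluated exactly; summability at the borderline $q=1$ is thereby lost.
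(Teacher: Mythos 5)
Your proposal is correct and takes essentially the same route as the paper's proof: the same piecewise-linear weight \eqref{eq:omega}, the same reduction of the hypotheses on $\tau$ to $g$ absolutely continuous with $g(x)\in[-1/2+\delta,1/2-\delta]$ on a subinterval, the same Prüfer-variable estimates (Lemma~\ref{lem:bound-on-norm} together with the uniform bound on the exponential factor in \eqref{eq:R-is-bounded}) yielding $\abs{J_{ab;\omega}(z,\lambda)}/K_b(\lambda,\lambda)^{1/2}\le C(\K)/\abs{z-\lambda}$, and the concluding tail sum $\sum\abs{\lambda}^{-q}$ with $q\in(1,2)$. The only divergences are cosmetic: the paper justifies that final summability by condition (C2) of \cite[Thm.~1.1]{woracek-00} rather than your Cartwright-type linear density count --- note that \cite[Thm.~3.4]{remling-book} only gives $\sum(1+\lambda^2)^{-1}<\infty$, so your density claim needs a different citation even though the fact itself is true for regular spaces of positive type --- and you make explicit, via item~\ref{easy-2}, the descent from a suitably chosen subinterval to arbitrary $a\in I_\circ$, which the paper leaves implicit.
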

\begin{proof}
Let $H\in\cC(b)$ be the coefficient matrix associated with $\cB$ according to
Theorem~\ref{thm:big-theorem}(i). Then,
\[
H(x) = \begin{pmatrix} \frac12 + g(x) & 0 \\[1mm] 0   & \frac12 - g(x)  \end{pmatrix},
	\quad g\in L^1(0,b),\quad g(x)\in[-1/2,1/2].
\]
Let $I=[v_1,v_2]\subset(0,b]$ be the interval where $\tau$ is strictly
increasing and $\tau'$ is absolutely continuous. It follows that $g$ is also
absolutely continuous there and $g(x_0)\in(-1/2,1/2)$ for some $x_0\in(v_1,v_2)$.
Then, given $\delta>0$ small enough, there exists $[x_1,x_2]\subset I$ such that
$g(x)\in[-1/2+\delta, 1/2-\delta]$ for all $x\in[x_1,x_2]$.

Choose $a\in[x_1,x_2)$, $c\in(a,x_2]$ and consider $\omega\in\text{AC}[a,b]$ defined
by \eqref{eq:omega}. Then, given a compact subset $\K\subset\C$, there exists a positive
constant $C_2(\K)$ such that
\begin{equation*}
\abs{\int_a^b u(z,x)^t \cJ u(\lambda,x) \omega'(x) dx}
	\le \int_a^c R(\lambda,x) \abs{u(z,x)^t \cJ  \xi_{\theta(\lambda,x)}} dx
	\le C_1(\K) R(\lambda,a).
\end{equation*}
Therefore, taking into account \eqref{lem:bound-on-norm}, there exists $C(\K)$ 
and $N>0$ such that $\lambda\not\in\K$ as long as $\abs{\lambda}>n$ and, recalling
\eqref{eq:expression-to-bound},
\begin{equation}
\label{eq:last-thing}
\sum_{\substack{\lambda\in\sigma(S_{b}^{\gamma}) \\ \abs{\lambda}> N}}
		\left(\frac{\abs{J_{ab;\omega}(z,\lambda)}}{K_b(\lambda,\lambda)^{1/2}}
		\right)^q
	= C(\K)\sum_{\substack{\lambda\in\sigma(S_{b}^{\gamma}) \\ \abs{\lambda}> N}}
		\frac{1}{\abs{\lambda}^q}.
\end{equation}
But, since $\cB_b$ is regular, the elements of $\sigma(S_{b}^{\gamma})$ obeys condition
(C2) of \cite[Thm.~1.1]{woracek-00}, which in turn implies the convergence of 
\eqref{eq:last-thing} for any $q\in(1,2)$.
\end{proof}


\section{An example}
\label{sec:example}

Consider a canonical system defined on an interval $[0,b]$ by a coefficient matrix $H$ 
that is not trace normalized. Define
\begin{equation}
\label{eq:change-of-variables}
y(x) \defeq \int_0^x \tr H(s) ds.
\end{equation}
If $u(z,x)$ solves
\begin{equation*}
\cJ u'(x) = -z H(x)u(x),\quad x\in(0,b),
\end{equation*}
then $v(z,y)\defeq u(z, x(y))$ solves
\begin{equation*}
\cJ v'(y) = -z H_1(y)v(y),\quad y\in(0,b_1),
\end{equation*}
where
\[
H_1(y) \defeq \frac{1}{\tr H(x(y))} H(x(y))
\]
and $b_1 = y(b)$ \cite[Thm.~1.5]{remling-book}. Clearly, $H_1\in\cC(b_1)$.
The change of variables \eqref{eq:change-of-variables} also induces an isometry 
\[
L^2_H(b) \overset{V}{\to}L^2_{H_1}(b_1)
\]
according to the rule
\[
(Vf)(y) = f(x(y)).
\]
As a consequence, if $\cB(H,b)$ denotes the de Branges space associated with 
the pair $\{b,H\}$, then $\cB(H,b)\equiv\cB(H_1,b_1)$ isometrically. Moreover, if
$\tau_1(s_1)$ is the exponential type of the de Branges subspace $\cB(H_1,s_1)$, then
$\tau_1(s_1) = \tau(s)$ with $s_1=y(s)$, and $\tau(s)$ being the exponential type of
$\cB(H,s)$.

\bigskip

Without further ado, let us consider the de Branges space $\cB(H,b)$ associated
with the canonical system given by
\begin{equation}
\label{eq:matrix-with-linear-entry}
H(x) \defeq \begin{pmatrix}
		1 & 0 \\[1mm] 0 & x
		\end{pmatrix},\quad x\in[0,b],
\end{equation}
and boundary condition $(\begin{smallmatrix}1\\0\end{smallmatrix})$ at $x=0$.
As already discussed, $\cB(H,b)$ is isometrically equal to $\cB(H_1,b_1)$, where
\begin{equation*}
H_1(y) \defeq \begin{pmatrix}
		\frac{1}{\sqrt{1+2y}} & 0 \\[1mm] 0 & \frac{\sqrt{1+2y}-1}{\sqrt{1+2y}}
		\end{pmatrix},\quad y\in[0,b_1],\quad b_1 = b + \tfrac12b^2,
\end{equation*}
and the same boundary condition at $x=0$. Notice that $\tau(s) = \tfrac32 s^{3/2}$.

For the coefficient matrix \eqref{eq:matrix-with-linear-entry}, equation 
\eqref{eq:canonical-system-base} becomes
\begin{equation}
\label{eq:example-with-airy-functions}
\begin{aligned}
	- u'_1(x) &= z x u_2(x),
	\\[1mm]
	  u'_2(x) &= z u_1(x).
\end{aligned}
\end{equation}
Clearly, this is equivalent to solving the second order differential equation
\[
u''_2(x) = -z^2 xu_2(x),\quad x\in[0,b],\quad z\in\C,
\]
with boundary conditions $u_2(0) = 0$ and $u_2'(0) = 1$.
Thus, the solution to \eqref{eq:example-with-airy-functions} is
\[
\begin{aligned}
u_1(z,x)
	&= \wi'(-z^{2/3}x),
	\\[1mm]
u_2(z,x)
	&= -z^{1/3} \wi(-z^{2/3}x),
\end{aligned}
\]
where the function $\wi$ is certain linear combination of the Airy functions
discussed in the Appendix.
In view of \eqref{eq:si-taylor}, it is clear that $u(\cdot,x)$ is a real entire 
$\C^2$-function.

\bigskip

Let $A_b^\gamma$ be the associated self-adjoint operator whose boundary
condition at $x=b$ is defined by $\gamma\in[0,\pi)$. Its spectrum is
\[
\sigma(A_b^\gamma)
	= \left\{\lambda\in\R :
		\wi'(-\lambda^{2/3}b)\sin\gamma
		+ \lambda^{1/3}\wi(-\lambda^{2/3}b)\cos\gamma = 0\right\}.
\]
In particular, the Dirichlet-type spectrum is
\[
\sigma(A_b^0)
	= \left\{\lambda\in\R\setminus\{0\} :
		\wi(-\lambda^{2/3}b) = 0\right\}\cup\{0\};
\]
notice that it is distributed symmetrically around $0$.
Eigenvalues (which are all simple) will be arranged according to increasing values, that is,
\[
\sigma(A_b^\gamma) = \left\{\lambda_n(\gamma)\right\}_{n\in\Z},
\quad\text{where}\quad
\lambda_n(\gamma) < \lambda_{n+1}(\gamma).
\]
Let $y_n$ be the zeros of the function $\wi(-x)$. Then, the Dirichlet-type spectrum obeys
$\lambda_n(0)^{2/3}b = y_n$ so in particular $\lambda_0(0) = 0$ and, in view of \eqref{eq:y_n},
\[
\lambda_{\pm n}(0)
	= \pm \tfrac32\frac{\pi}{b^{3/2}}(n+\tfrac{1}{12})\left[1 + \cO(n^{-2})\right]
\quad
(n\to\infty).
\]
For $\gamma\in(0,\pi)$, define $\beta\defeq(\cot\gamma)/b^{1/2}$. Let $w(\lambda,x)$ be
the function defined by \eqref{eq:w}. We note the following: 
If $\lambda > 0$, then
\[
\lambda\in\sigma(A_b^\gamma)
\quad\text{if and only if}\quad
w(\beta,\lambda^{2/3}b) = 0,
\]
while if 
$\lambda < 0$, then
\[
\lambda\in\sigma(A_b^\gamma)
\quad\text{if and only if}\quad
w(-\beta,(-\lambda)^{2/3}b) = 0.
\]
Therefore $\lambda_0(0)<\lambda_0(\gamma)<\lambda_1(0)$ and, according to \eqref{eq:x_n},
\[
\lambda_{\pm n}(\gamma)
	= \pm \tfrac32\frac{\pi}{b^{3/2}}(n+\tfrac{7}{12})
	\left[1 \mp \frac{\arctan\beta}{\pi}(n+\tfrac{7}{12})^{-1} + \cO(n^{-2})\right]
\quad
(n\to\infty).
\]

\bigskip

Now, let us take a look at the denominator in \eqref{eq:expression-to-bound}.
Henceforth, assuming $\lambda\ne 0$, we have
\begin{align*}
\norm{u(\lambda,\cdot)}^2_{L^2_H(b)}
	&= \lambda^{-2/3} \left[\int^0_{-\lambda^{2/3}b} \wi'(y)^2 dy
		- \int^0_{-\lambda^{2/3}b} y \wi(y)^2 dy\right]
	\\[1mm]
	&= \frac{\lambda^{-2/3}}{3}
		\left[\wi(y)\wi'(y) + 2y\wi'(y)^2 - 2y^2\wi(y)^2\right]^0_{-\lambda^{2/3}b}
	\\[1mm]
	&= \frac{\lambda^{-2/3}}{3}
		\left[2(\lambda^{2/3}b)^2\wi(-\lambda^{2/3}b)^2\right.
	\\[1mm]
	&\quad\left. +\; 2\lambda^{2/3}b\wi'(-\lambda^{2/3}b)^2
			- \wi(-\lambda^{2/3}b)\wi'(-\lambda^{2/3}b)\right]
\end{align*}
(see \cite[\S 3.2.2]{vallee} for details). In particular,
\begin{align*}
\norm{u(\lambda_{\pm n}(0),\cdot)}^2_{L^2_H(b)}
	&= \tfrac23 b \wi'(-y_n)^2
	 =\tfrac23 b C_0^2(\tfrac32\pi n)^{1/3}\left[1 + \cO(n^{-1})\right].
\end{align*}
Similarly,
\begin{align}
\norm{u(\lambda_{\pm n}(\tfrac{\pi}{2}),\cdot)}^2_{L^2_H(b)}
	&= \tfrac23 b x_n(0) \wi(-x_n(0))^2
	 =\tfrac23 b C_0^2(\tfrac32\pi n)^{1/3}\left[1 + \cO(n^{-1})\right].
	\label{eq:norm-airy-example}
\end{align}
With some extra work it can be shown that \eqref{eq:norm-airy-example} holds
true for all $\gamma\in[0,\pi)$. As a consequence, we obtain the following uniform
characterization for the spaces of samples,
\[
\ell_p(\cB,\gamma)
	= \left\{\{\alpha_n\}_{n\in\Z}\subset\C :
		\left\{\frac{\alpha_n}{(1+\abs{n})^{1/6}}\right\}\in \ell_p\right\}
\]
for every $\gamma\in[0,\pi)$ and $p\in(2,\infty]$.

\bigskip

Finally, let us look at the numerator in \eqref{eq:expression-to-bound},
\begin{align}
\int_a^b &u(z,x)^t \cJ u(\lambda,x) dx\nonumber
	\\[1mm]
	&= \int_a^b u_2(z,x)u_1(\lambda,x) dx - \int_a^b u_1(z,x)u_2(\lambda,x) dx\nonumber
	\\[1mm]
	&= - \int_a^b z^{1/3}\wi(-z^{2/3}x)\wi'(-\lambda^{2/3}x) dx
		+ \int_a^b \lambda^{1/3}\wi(-\lambda^{2/3}x)\wi'(-z^{2/3}x) dx. \label{eq:example}
\end{align}
The first term above can be written as follows,
\begin{multline*}
\text{1st term}
	= - C_0 \lambda^{1/6}
		\int_a^b x^{1/4}\cos(\tfrac23\lambda x^{3/2}-\tfrac{\pi}{12}) z^{1/3}\wi(-z^{2/3}x) dx
		\\[1mm]
	  - \int_a^b z^{1/3}\wi(-z^{2/3}x)g(\lambda^{2/3}x) dx,
\end{multline*}
where $g(x) = \cO(x^{-5/4})$ as $x\to\infty$ (see \eqref{eq:u2-with-airy}). An integration
by parts implies
that, given any compact subset $\K\subset\C$, there exists $C(\K)>0$ such that
\[
\abs{\text{1st term}}
	\le \frac{C(\K)}{\abs{\lambda}^{5/6}}
\]
for all $z\in\K$. Since an analogous reasoning can be applied to the second term in 
\eqref{eq:example}, we obtain
\[
\abs{\int_a^b u(z,x)^t \cJ u(\lambda,x) dx}
	\le \frac{2C(\K)}{\abs{\lambda}^{5/6}}.
\]
All in all, given $\gamma\in[0,\pi)$ and any compact subset $\K\subset\C$, there
exists $n_0\in\N$ such that 
$\lambda_n(\gamma)\not\in\K$ for all $\abs{n}\ne n_0$ and
\[
\sum_{\abs{n}\ge n_0}
		\frac{\abs{\int_a^b u(z,x)^t \cJ u(\lambda_n(\gamma),x) dx}}
		{\abs{\lambda_n(\gamma) - z}\norm{u(\lambda_n(\gamma),\cdot)}_{L^2_H(b)}}
	\le C(\K) \sum_{\abs{n}\ge n_0} \frac{1}{n^2}
\]
for all $z\in\K$. That is, $\cB(H,b)$ has the $\ell_\infty$-oversampling property
relative to every de Branges subspace of it.


\section*{Acknowledgments}

This research is based upon work supported by Universidad Nacional del Sur (Argentina)
under grant PGI 24/L117. J.\,H.\,T. thanks IIMAS-UNAM for their kind
hospitality. L.\,O.\,S. is supported by CONACyT CF-2019 \textnumero\,304005.


\appendix

\section{Appendix}

Let us define
\begin{equation*}
\wi(z) \defeq \pi \left[\ai(0)\bi(z) - \bi(0)\ai(z)\right],
\end{equation*}
where $\ai$ and $\bi$ are the Airy functions of the first and second kind. This real
entire function is the solution to the Airy differential equation that obeys the
initial conditions $\wi(0) = 0$
and $\wi'(0) = 1$. Using e.g. \cite[\S 9.2 and \S 9.4]{nist}, one can verify that
\begin{equation}
\label{eq:si-taylor}
\wi(z)  = \sum_{k=0}^\infty\frac{(-1)^{k}c(k)}{(3k+1)!}z^{3k+1},
	\quad\text{where}\quad
c(k) = \prod_{l=0}^{k-1}(2+3l).
\end{equation}
$\wi$ and $\wi'$ admit various asymptotic expansions as $z\to\infty$ that are uniform in
different annular sectors of the complex plane. We only mention those to be used in this work,
namely,
\begin{align*}
\wi(-z)
	&= C_0 z^{-1/4}\left[-\sin(\zeta - \phi_0)
			P(\zeta)
			+\cos(\zeta - \phi_0)
						Q(\zeta)\right]
\intertext{and}
\wi'(-z)
	&= C_0 z^{1/4}\left[\cos(\zeta - \phi_0)
			R(\zeta)
			+\sin(\zeta - \phi_0)
						S(\zeta)\right],
\end{align*}
as $z\to\infty$ with $\abs{\arg(z)}\le 2\pi/3-\delta$, where $\delta$ is a arbitrarily small 
positive number.
Here
\[
\zeta = \tfrac23 z^{3/2},
\quad
C_0 = \sqrt{\pi(\ai(0)^2 + \bi(0)^2)}
	= \frac{2\sqrt{\pi}}{3^{2/3}\Gamma(2/3)}
\quad\text{and}\quad
\phi_0 = \frac{\pi}{4} - \arctan\!\left(\frac{\bi(0)}{\ai(0)}\right)
		= \frac{\pi}{12};
\]
see \cite{nist} for details and \cite{fabijonas} for the definition of the asymptotic expansions
$P$, $Q$, $R$ and $S$.
In particular,
\begin{align}
\wi(-x)
	&= - C_0 x^{-1/4}\sin(\tfrac23 x^{3/2} - \tfrac{\pi}{12})
		+ \cO(x^{-7/4}) \nonumber
\intertext{and}
\wi'(-x)
	&= C_0 x^{1/4}\cos(\tfrac23 x^{3/2} - \tfrac{\pi}{12})
		+ \cO(x^{-5/4}), \label{eq:u2-with-airy}
\end{align}
as $x\to\infty$ along the real line.
Also, let us define $w:\R\times\R_+\to\R$ by the rule
\begin{equation}
\label{eq:w}
w(\beta, x) \defeq \wi'(-x) + \beta x^{1/2}\wi(-x).
\end{equation}
Clearly,
\[
w(\beta, x)
	= C_0 x^{1/4}\left[\cos(\tfrac23 x^{3/2} - \tfrac{\pi}{12})
		- \beta\sin(\tfrac23 x^{3/2} - \tfrac{\pi}{12})\right]
		+ \cO(x^{-5/4})
\quad
(x\to\infty).
\]

Let $\{y_n\}_{n=0}^\infty$ and $\{x_n(\beta)\}_{n=0}^\infty$ be the sequences of zeros of 
$\wi(-x)$ and $w(\beta,x)$ respectively, arranged according to increasing value. 
Note that $y_0=0$ while $x_0(\beta)>0$ for all $\beta\in\R$. Using standard techniques for
inversion of asymptotic expansions, along with the phase principle for enumeration of 
the resulting sequence of zeros (see \cite{fabijonas}), one can prove that
\begin{align}
y_n &= \left(\tfrac32\pi(n+\tfrac{1}{12})\right)^{2/3}
		\left[1 + \cO(n^{-2})\right]
	\label{eq:y_n}
\intertext{and}
x_n(\beta)
	&= \left(\tfrac32\pi(n+\tfrac{7}{12})\right)^{2/3}
		\left[1 - \arctan\beta\left(\tfrac32\pi(n+\tfrac{7}{12})\right)^{-1} 
		+ \cO(n^{-2})\right],\label{eq:x_n}
\end{align}
as $n\to\infty$.


\end{document}